\newtheorem{theorem}{Theorem}[section]
\newtheorem{corr}{Corollary}[section]
\newtheorem{lemma}{Lemma}[section]
\newtheorem{prop}{Proposition}[section]
\newtheorem{remark}{Remark}[section]
\newtheorem{df}{Definition}[section]
\newtheorem{ex}{Example}[section]
\newtheorem{conj}{Conjecture}[section]
\def\polhk#1{\setbox0=\hbox{#1}{\ooalign{\hidewidth\lower1.5ex\hbox{`}\hidewidth\crcr\unhbox0}}}
\def\Ind{{\rm Ind}}
\def\pt{{\rm pt}}
\def\text{{\rm }}
\def\id{{{\rm id}}}
\def\co-ind{{\operatorname{co-ind}}}
\def\indp{{\operatorname{ind_{\it p}}}}
\def\ind{{\operatorname{ind^G\,}}}
\def\inn{{\operatorname{in\,}}}
\def\tind{{\operatorname{t-ind^G\,}}}
\def\inp{{\operatorname{in_{\it p}}}}
\def\Ind{\operatorname{Ind}}
\let\to=\rightarrow
\title {Borsuk--Ulam type theorems for $G$-spaces with applications to Tucker type lemmas}
\author {Oleg R. Musin and Alexey Yu. Volovikov}
\begin{document}
\date{}
\maketitle

\begin{abstract} In this paper we consider several generalizations of the Borsuk--Ulam theorem for $G$-spaces and apply these results to Tucker type lemmas for $G$-simplicial complexes and $PL$-manifolds.
\end{abstract}

\medskip

\noindent {\bf Keywords:} Borsuk--Ulam theorem, Tucker's lemma, $G$--space,  $G$--index

\section{Introduction}

The classical Borsuk--Ulam theorem \cite{Borsuk} states that {\it for any continuous mapping \,\,\, $f:{\Bbb S}^d \to {\Bbb R}^d$ there is a point $x\in{\Bbb S}^d$
such that $f(-x)=f(x)$}. In the same paper~\cite{Borsuk} Borsuk showed that this result is equivalent to the following statement:

\medskip

\noindent{\bf Theorem A.} {\em   For any continuous mapping $f:{\Bbb B}^d \to {\Bbb R}^d$ such that $f$ is odd on the boundary
$\partial\,{\mathbb B}^d = {\mathbb S}^{d-1}$,  there exists a point $x\in{\Bbb B}^d$ such that $f(x)=0\in \mathbb R^d$.}

\medskip

In~\cite{Mus} it was shown that similar statement holds in a case when
${\mathbb S}^{d-1} = \partial M^d$ where $M^d$ is a manifold. In  \cite{MusVo} we extended this result for more general spaces. Namely we considered a space $X$ with a closed subspace $Y\subset X$ which is a free $\mathbb Z_2$-space (such a space is called bounded). In~\cite{MusVo} we discussed conditions on X and $Y$ under which for any map $f:X\to \mathbb R^d$ such that $f|_{Y}:Y\to \mathbb R^d$ is equivariant there exists a point $x\in X$ such that $f(x)=0\in \mathbb R^d$.

Note that Theorem~A follows from 
{\bf odd mapping theorem}   that states  {\it``Every continuous odd mapping $h:{\Bbb S}^d\to {\Bbb S}^d$  has odd degree''}, in particular, any odd mapping $h:{\Bbb S}^d \to {\Bbb S}^d$ can not be homotopic to zero (see \cite{AH}, \cite{Krasn55}, \cite{Krasn}, \cite[Sect. 2]{MusS}). This theorem follows also from the Conner--Floyd generalization of the Borsuk--Ulam theorem
given in~\cite{CF}.  In \cite{MusS} the odd mapping theorem has been extended for BUT--manifolds.

One more Conner--Floyd's generalization of odd mapping theorem is in their earlier paper~\cite{CF60}, where they proved that if there is an equivariant map $h:X\to Y$ of free $\mathbb Z_2$--spaces $X$ and $Y$ with the same Yang's cohomological index~\cite{Yang54} which equals $n$, then $h^*:H^n(Y;\mathbb Z_2)\to H^n(X;\mathbb Z_2)$ is a nontrivial homomorphism (of nonzero groups). In ~\cite{MusVo} we used this fact as well as other properties of indexes of the free $\mathbb Z_2$--spaces.

In this paper as a main tool in our considerations  we use indexes of $G$-spaces where $G$ is a finite group. For groups other than $\mathbb Z_2$ indexes were introduced by
Yang, Wu, Schwarz, Conner--Floyd and others.

\bigskip

In this paper we prove $G$-analogs of the above results for maps of $G$--spaces where $G$ is a finite group. 
 We use $G$-analogs of the odd mapping theorem such as the Krasnosel'skii theorem on the degree of $G$-maps of a sphere and $G$--analogs of the Conner--Floyd result on cohomological indexes of free $\mathbb Z_2$--spaces.

Let $X$ be a free $G$-space. As a main tool in our consideration we use indexes of $G$-spaces,  {\em topological index  $\tind X$}
and  {\em cohomological index} $\ind{X}$, whose definitions and properties are given in Sections~2 and 3 respectively.

In Section 4 we give first applications of cohomological index and present generalizations of Dold's results from his  highly cited paper \cite{Dold}. In particular we discuss
nonexistence of the equivariant map $X*G\to X$ for compact or finite-dimensional paracompact free $G$-space $X$ proved in \cite{Vo05} with the help of the cohomological index (alternative proof is given in \cite{Passer}).
We show  that this fact follows directly from one of Dold's result proved  in his paper \cite{Dold}. Besides we give homological versions of Dold's results proved in \cite{Dold}.

In Sections~5 and 6 we use these notions to obtain $G$--generalizations of Theorem~A. 

Assume that $G$ can act freely on ${\mathbb S}^{d-1}$. Then there is an obvious semi-free $G$-action on $\mathbb R^d$ with the unique fixed point $0\in\mathbb R^d$ (and free on $\mathbb R^d\setminus 0$). Note that the degree of any equivariant map ${\mathbb S}^{d-1}\to {\mathbb S}^{d-1}$ equals 1 modulo $|G|$ (see \cite{Krasn}). Actually, it implies that for any continues map $f:{\mathbb B}^{d}\to {\mathbb R}^{d}$ which is equivariant on the boundary the zero set $f^{-1}(0)$ is not empty. Theorem~5.5 extends this fact for a case when ${\mathbb S}^{d-1}$ is embedded into a space  $X$.

\medskip

In particular, Theorems 5.1--5.3 for $G$-spaces imply the following result for manifolds (or pseudomanifolds):

\medskip

\noindent{\bf Theorem 5.4.} {\em  Let $M^n$ be a compact connected orientable manifold (or a pseudomanifold) with the connected boundary $\partial M$, and assume that $G$ can act freely on $\partial M$. Consider a continuous mapping $f:M\to {\Bbb R}^n$ such that
$f|_{\partial M}:\partial M\to {\Bbb R}^n$ is an equivariant map, where $\mathbb R^n$ is considered as a semifree $G$-space with the unique
fixed point in the origin $0\in \mathbb R^n$. If\, $\ind \partial M=n-1$ then the zero set $Z_f=f^{-1}(0)$ is not empty.}

\medskip

In Section~6 we discuss an alternative approach for $G$-versions of the Borsuk--Ulam theorem and prove also Bourgin--Yang type theorems for $G$-spaces.







\medskip





\medskip

Last section is devoted to $G$-versions of the Tucker lemma \cite{Tucker}, which is known to be  a discrete analog of the  Borsuk--Ulam theorem.

Let $T$ be some triangulation of the $d$-dimensional ball ${\mathbb B}^d$.
We call $T$ {\it antipodally symmetric on the boundary}  if the set of simplices of $T$ contained in the boundary $\partial\,{\mathbb B}^d = {\mathbb S}^{d-1}$ of the ball ${\Bbb B}^d$ is an antipodally symmetric triangulation of  ${\mathbb S}^{d-1}$, that is if $s\subset {\mathbb S}^{d-1}$ is a simplex of $T$, then $-s$ is also a simplex of $T$.

\medskip

\noindent{\bf Theorem B.} {\em   {\bf (Tucker's lemma)} Let $T$ be a triangulation of  ${\mathbb B}^d$ that is antipodally symmetric on the boundary.
Let $$L:V(T)\to \{+1,-1,+2,-2,\ldots, +d,-d\}$$ be a labeling of the vertices of $T$ that satisfies $L(-v)=-L(v)$ for every vertex $v$ on the boundary.
Then there exists an edge in $T$ that is {\bf complementary}, i.e. its two vertices are labeled by opposite numbers.
}

\medskip

Consider also the following version of Tucker's lemma:

\medskip

\noindent{\bf Theorem C.} {\em
Let $T$ be a centrally symmetric triangulation of the sphere ${\Bbb S}^d$. Let $$L:V(T)\to \{+1,-1,+2,-2,\ldots, +d,-d\}$$ be an equivariant labeling,
i.e. $L(-v)=-L(v)$). Then there exists a complementary edge.}

\medskip

It is well known, see \cite{Mat}, that these theorems are equivalent to the Borsuk--Ulam theorem.

\medskip

Let $X$ be a finite simplicial complex, $V(X)$ denote its vertex set and $C$ be a finite set that we call the set of colors. Recall that a $C$--labeling (coloring) of $V(X)$ is a map $V(X)\to C$. In case $X$ is a finite $G$-simplicial complex and $C$ is a $G$-set we say that a $C$--labeling is {\em equivariant} if the map $V(X)\to C$ is equivariant. When $C=G\times\{1,\dots,n\}$ we call a $C$--labeling a {\em $(G,n)$--labeling}. Thus, a $(G,n)$-labeling prescribes to each vertex some pair $(g,k)$, $g\in G$, $k\in \{1,\dots,n\}$. In what follows we consider $G\times\{1,\dots,n\}$ with the following $G$-action: $h\cdot(g,k)=(hg,k)$.

 An edge of $X$ is called {\em complementary} if labels of its vertices are $(g_1,k_1)$ and $(g_2,k_2)$ with
$g_1\not=g_2$ and $k_1=k_2$.

\medskip

If $G=\mathbb Z_2\cong C_2=\{1,-1\}$ is the cyclic group of order $2$ then a $(G,n)$--labeling is a Tucker labeling. Indeed, it follows from the obvious bijection $(\pm 1,k)\leftrightarrow \pm k$ between sets $\{1,-1\}\times \{1,\ldots,n\}$ and $\{+1,-1,+2,-2,\ldots, +n,-n\}$.

\medskip

The main result of Section 7 is the following extension of Tucker's lemma:

\medskip

\noindent{\bf Theorem 7.1.} {\em Let $X$ be a simplicial complex with a free simplicial $G$-action. Then  $\tind X\geq n$\, if and only if for any equivariant $(G,n)$--labeling of the vertex set of an arbitrary equivariant triangulation of $X$ there exists a complementary edge.}

\medskip


We consider also Tucker type lemmas for bounded spaces. In particular,  Theorem 7.2 yields the following theorem for manifolds:

\medskip

\noindent{\bf Theorem 7.3.} {\em  Let $M^n$ be a connected compact orientable $PL$--manifold such that its boundary $\partial M$ is homeomorphic to the sphere ${\Bbb S}^{n-1}$. Let $T$ be a triangulation of $M$. Suppose that  there exists a free simplicial action of a finite group $G$ on $\partial T$. Then for any $(G,n)$--labeling of $V(T)$ that is an equivariant on  $\partial T$ there exists a complementary edge.}

\bigskip

In what follows we assume that all spaces in consideration are paracompact.

\section{Topological index}

Consider a group $G$ as a discrete free $G$-space. Let $J^m(G)=G*\dots *G$ be the join of $m$-copies of $G$ with the
diagonal action of $G$.

\begin{df}
Let $X$ be a free $G$-space. Topological index $\tind X$ equals minimal $n$ such that there exists an equivariant map
$X\to J^{n+1}(G)$. If no such $n$ exists, then $\tind X=\infty$.
\end{df}
Correctness of this definition follows from the fact (discussed below) that there exists no equivariant map $J^{n+1}(G)\to J^{n}(G)$.

\begin{remark} {\rm
1) We can take $E_G=G*\dots*G*\dots=J^{\infty}(G)$ as a total space of the universal $G$-bundle $E_G\to B_G$.

2) If $G=\mathbb Z_2$ then $J^{m+1}(\mathbb Z_2)$ is equivariantly homeomorphic to $\mathbb S^m$,
since $SY=Y*\mathbb Z_2$, where $SY$ is the suspension, and
\begin{center} $\mathbb S^m=S\mathbb S^{m-1}=\mathbb S^{m-1}*\mathbb Z_2=\mathbb S^{m-2}*\mathbb Z_2*\mathbb Z_2=\dots=J^{m+1}(\mathbb Z_2)$.\end{center}

3) For a cyclic group $G=\mathbb Z_q$, $q>2$, we can take in the definition of index the following sequence of test spaces:
$G$, $\mathbb S^1$, $\mathbb S^1*G$, $\mathbb S^3$, $\mathbb S^3*G$, $\mathbb S^5$, $\mathbb S^5*G, \dots$, where each odd dimensional sphere is considered with some
free action of $G=\mathbb Z_q$. For example we can use the following free action of $\mathbb Z_q$ on spheres. Let $\mathbb S^{2n-1}$ be considered as a standard unit sphere in $\mathbb C^n$. Then the generator of $G$ acts as multiplication of coordinates of $n$-tuples by $e^{2\pi i/q}$.
}
\end{remark}

The main property of the topological index:

\medskip

\begin{center}
{\it If $X\to Y$ is equivariant then $\tind X\le \tind Y$.}
\end{center}

\medskip

It is not hard to see that if $X$ is either compact, or paracompact and finite-dimensional,
then $\tind X<\infty$, in the second case $\tind X\le \dim X$. For the proof one can use nerves of $G$-invariant coverings of a $G$-space and the fact that
$J^{n+1}(G)$ is $n$-universal, i.e. any $G$-$CW$-complex of dimension not exceeding $n$ can be mapped equivariantly to $J^{n+1}(G)$. If $G$ can act freely on a sphere of some dimension $N$ then this sphere is $N$-universal.

The equality
$\tind J^{n+1}(G)=n$ provides correctness of the definition of topological index. This fact
can be proved using (co)homological index (see next section), which is a lower bound for topological index. Proofs that don't use cohomological indexes can be found in \cite{Mat}. One more or less elementary proof is sketched below.

\medskip

We need to show that there exists no equivariant map $J^{N+1}(G)\to J^N(G)$ for any $N$. Let us call continuous equivariant map of $G$-spaces also as a $G$-map. Obviously  a $G$-map $J^2(G)=G*G\to J^1(G)=G$ cannot exist, since $G*G$ is connected while $G$ is not. Thus we can assume that $N>1$.

First let $G=\mathbb Z_p$ and suppose that a $G$-map $J^{N+1}(G)\to J^N(G)$ exists for some $N>1$.
If $p=2$, then this map is just a map of spheres and the contradiction follows from the Borsuk--Ulam theorem.
If $p$ is odd then taking join with $G$ we obtain also a $G$-map $J^{N+2}(G)\to J^{N+1}(G)$. One of $N+1$ and $N+2$ is even, say equals $2m$, hence we have a $G$-map $J^{2m}(G)\to J^{2m-1}(G)$. Taking the composition of this map with the embedding $J^{2m-1}(G)\subset J^{2m}(G)$ we obtain an equivariant nullhomotopic map  $J^{2m}(G)\to J^{2m}(G)$. Now there exist equivariant maps $\mathbb S^{2m-1}\to J^{2m}(G)$ and $J^{2m}(G)\to \mathbb S^{2m-1}$. Taking composition of these three maps we obtain an equivariant nullhomotopic map  $\mathbb S^{2m-1}\to \mathbb S^{2m-1}$, a contradiction, since the degree of an equivariant map of spheres equals 1 modulo order of $G$ by Krasnosel'skii theorem (see e.g. \cite{Krasn55, Krasn}). Also at this point one can apply Dold's arguments~\cite{Dold} (based only on the notion of fixed point index) showing that an equivariant selfmap of a sphere cannot be nullhomotopic.

In general case let $G_0$ be any nontrivial subgroup of $G$. Consider $J^n(G)$ as a $G_0$-space. Then it is easy to see that $J^n(G)$  like $J^n(G_0)$ is a $(n-1)$-universal $G_0$-space. Hence there exist $G_0$-maps $J^n(G_0)\to J^n(G)$ and $J^n(G)\to J^n(G_0)$. Therefore the existence of a $G$-map $J^{N+1}(G)\to J^N(G)$ implies the existence of a $G_0$-map $J^{N+1}(G_0)\to J^N(G_0)$. To finish the proof it remains to take $G_0\cong\mathbb Z_p$ where $p$ is any prime divisor of  the order of $G$ .


\medskip

\begin{remark} {\rm For $G=\mathbb Z_2$ this index was introduced by Yang~\cite{Yang54} under the name $B$-index (Yang also introduced homological
index which is discussed below).
For finite groups topological index was introduced by M.\,Krasnosel'skii and in general case (for topological groups) by
Albert Schwarz under the name genus (more precise genus is by 1 greater than topological index). In fact Schwarz~\cite{Sv}
introduced and studied more general notion of genus of a fiber space which generalize the
notions of the Lusternik--Shnirelman category and of Krasnosel'skii genus of a covering (it is valid for a continuous surjective map).

On his web page Schwarz \cite{Sv2} writes:
"The same notion was rediscovered (under another name) 25 years later by S.\,Smale who used to estimate topological complexity
of algorithms".

Nowadays this notion is usually called sectional category.
}
\end{remark}



\section{Cohomological index}

Consider first the case of an action of the group $\mathbb Z_p$ of prime order $p$ (the case $p=2$ was considered in~\cite{MusVo}). Using Smith's sequences we can define for a free $\mathbb Z_p$-space its cohomological index
$\indp X\in \{0,1,2,\dots;\infty\}$ possessing the following properties (see \cite{Vo05} for details):

\vskip7pt

1. If there exists an equivariant map $X\to Y$ of free $\mathbb Z_p$-spaces then $\indp (X)\leq \indp (Y)$.

2. If $X=A \bigcup B$ are open invariant subspaces, then $$\indp (X)\le \indp (A)+\indp (B)+1.$$

3. Tautness: If Y is a closed invariant subspace of $X$, then there exists an open invariant neighborhood of $Y$ such that
$\indp (Y)=\indp (U)$.

4. $\indp (X)>0$ if $X$ is connected.

5. Let $X$ be either compact, or paracompact and finite dimensional. Then $\indp (X)<\infty$.

6. Assume that $X$ is connected and $H^i(X;{\mathbb Z_p})=0$ for $0<i<N$. Then $\indp (X)\ge N$.

7. Assume that $X$ is finite dimensional and $H^i(X;{\mathbb Z_p})=0$ for $i>d$. Then $\indp (X)\le d$.

8. If there exists an equivariant map $f:X\to Y$ and $\indp (X)= \indp (Y)=k<\infty$ then $0\not=f^*:H^k(Y;\mathbb Z_p)\to H^k(X;\mathbb Z_p)$.

\vskip5pt

Here \u Cech cohomology groups are used.

\medskip

For $G=\mathbb Z_2$ this index was introduced by Yang~\cite{Yang54}. In \cite{Yang57} Yang actually used this index for $G=\mathbb Z_3$ without naming it.
Conner $\&$ Floyd in~\cite{CF60} introduced for any finite group $G$ and a commutative ring with unit $L$ a cohomological index for which they used notation
$\co-ind_L(\,\cdot\,)$. At the same time A.S.\,Schwarz~\cite{Sv} introduced homological genus. It can be shown that homological genus equals \, $\co-ind_{\mathbb Z}(\,\cdot\,)+1$, and $\co-ind_{\mathbb Z_p}(\,\cdot\,)$ for $G=\mathbb Z_p$ coincides with $\indp(\,\cdot\,)$.

\medskip

In what follows the property~8 will serve as our main tool.

For example, from properties~1 and 8 we immediately obtain that $\indp(\,\cdot\,)$ is stable, i.e.
$\indp\, X*\mathbb Z_p=\indp X+1$ (see \cite[Corollary~3.1]{Vo05}), so if $\indp X$ is finite then there exists no equivariant map $X*\mathbb Z_p\to X$.

\bigskip

Now we recall the definition of $\indp(\,\cdot\,)$. Denote by $\pi:X\to X/\mathbb Z_p$ the  projection. Then
there are two Smith sequences (see e.g. \cite{Bredon}):
$$
\begin{CD}
\dots\to H^k_{\rho}(X) @>>> H^k(X) @>\pi^{!}>> H^k(X/\mathbb Z_p) @>\delta_1>> H^{k+1}_{\rho}(X)\to\dots
\end{CD}
$$
and
$$
\begin{CD}
\dots\to H^k(X/\mathbb Z_p) @>\pi^*>> H^k(X) @>>> H^k_{\rho}(X) @>\delta_2>> H^{k+1}(X/\mathbb Z_p)\to\dots
\end{CD}
$$
Here coefficients $\mathbb Z_p$ are omitted, groups $H^*_{\rho}(X)$ are called special Smith cohomology groups, and $\pi^{!}$ is called the transfer (see \cite{Bredon}).

Let us define $s_{2d}:H^0(X/\mathbb Z_p)\to H^{2d}(X/\mathbb Z_p)$ and $s_{2d+1}:H^0(X/\mathbb Z_p)\to H^{2d+1}_{\rho}(X)$ as
$s_{2d+1}=\delta_1 s_{2d}$ and $s_{2d+2}=\delta_2 s_{2d+1}$ where $s_0=\id$, and put $u_n(X)=s_n(1)$, $1\in H^0(X/\mathbb Z_p)$.
Then $\indp X$ equals maximal $n$ such that $u_n(X)\not=0$.

The following proposition is a partial converse to  Property~8 (see also \cite[Proposition~3.3]{Vo00}).

\begin{prop}
Let $X$ and $Y$ be free $\mathbb Z_p$-spaces and $f: X\to Y$  an equivariant map.
Assume that $k$ is odd and

a) $\indp (Y) = k$,

b) $\dim X=k$,

c) $H^k(X;\mathbb Z_p)=H^k(Y;\mathbb Z_p)=\mathbb Z_p$,

d)  $f^*:H^k(Y;\mathbb Z_p)\to H^k(X;\mathbb Z_p)$ is an isomorphism.

Then \, $\indp(X) = k$.
\end{prop}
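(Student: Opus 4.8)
The plan is to use the Smith-sequence description of $\indp$ given just above the statement, together with the naturality of both Smith sequences under the equivariant map $f$. Since $\indp(Y)=k$, we have $u_k(Y)=s_k(1)\neq 0$ in the appropriate group ($H^k_\rho(Y)$ if $k$ is odd, which it is). The classes $u_n$ are natural with respect to equivariant maps: an equivariant $f:X\to Y$ induces a commutative ladder between the two Smith sequences of $X$ and of $Y$, hence $f^*u_n(Y)=u_n(X)$ for every $n$, where here $f^*$ denotes the map induced on $H^*_\rho$ by $f$ (which on $H^*_\rho$ is compatible with the map induced on $H^*(-/\mathbb Z_p)$ and on $H^*(-)$). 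So it suffices to show that the relevant component of $f^*$ in degree $k$ is injective on the class $u_k(Y)$, i.e. that $f^*u_k(Y)\neq 0$.

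The key step is to transport the hypothesis (d) — that $f^*:H^k(Y)\to H^k(X)$ is an isomorphism of one-dimensional $\mathbb Z_p$-spaces — into a statement about $H^k_\rho$. First I would use (b), $\dim X=k$, to observe that $H^{k+1}_\rho(X)=0$, so the first Smith sequence of $X$ in degree $k$ reads
\[
H^k(X)\xrightarrow{\ \pi^!\ }H^k(X/\mathbb Z_p)\xrightarrow{\ \delta_1\ }H^{k+1}_\rho(X)=0,
\]
hence $\pi^!$ is onto in degree $k$ for $X$; dually, from the second sequence and $\dim X=k$ one controls $H^k_\rho(X)$. The point is that $u_k(Y)=\delta_1 s_{k-1}(1)$ (as $k$ is odd), and I want to chase $s_{k-1}(1)\in H^{k-1}(Y/\mathbb Z_p)$ forward: by naturality its image in $H^{k-1}(X/\mathbb Z_p)$ is $s_{k-1}(1)$ for $X$, and then $\delta_1$ of that is $u_k(X)$. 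So the real content is: the map $H^{k-1}(Y/\mathbb Z_p)\to H^{k-1}(X/\mathbb Z_p)$ does not kill $s_{k-1}(1)$ modulo $\ker\delta_1=\mathrm{im}\,\pi^!$. Using (c) and (d), together with the exact sequences, I would identify $\ker\delta_1$ in degree $k-1$ for $Y$ with $\mathrm{im}(\pi^!:H^{k-1}(Y)\to H^{k-1}(Y/\mathbb Z_p))$ and compare with the same for $X$; the isomorphism $f^*$ in degree $k$ on ordinary cohomology, combined with the finite-dimensionality constraint $\dim X=k$ which forces the Smith long exact sequences of $X$ to terminate, pins down $H^k_\rho(X)\cong\mathbb Z_p$ and forces $f^*u_k(Y)=u_k(X)\neq 0$. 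This gives $\indp(X)\geq k$, and $\indp(X)\leq \dim X=k$ by property 7 (or by $\indp\leq\tind\leq\dim$), so $\indp(X)=k$.

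The main obstacle I expect is the bookkeeping in the Smith sequences near the top dimension $k$: one must carefully distinguish the two sequences, track whether $u_k$ lives in $H^k_\rho$ or in $H^k(-/\mathbb Z_p)$ (this is where the parity hypothesis on $k$ enters), and verify that the naturality ladder identifies the class $u_k(X)$ with the image of $u_k(Y)$ under the correct induced map — and then that hypotheses (c),(d) really do imply this image is nonzero rather than merely that some related map is an isomorphism. A clean way to organize this, which I would adopt if the direct chase gets unwieldy, is to invoke property~8 in reverse: if $\indp(X)=m<k$ then $m<k=\dim X$ is impossible to reconcile with $f^*$ being an isomorphism on $H^k$, because $\indp(X)=m$ together with property 7-type vanishing would force $H^k(X;\mathbb Z_p)$ to behave incompatibly with $u_k$; but the quantitative version via the Smith classes is the honest proof, and the parity/degree bookkeeping there is the delicate point.
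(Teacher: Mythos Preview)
Your framework is right — use the Smith-sequence description of $\indp$, naturality of the classes $u_n$ under equivariant maps, and the vanishing forced by $\dim X=k$ — and you correctly isolate the goal as showing $u_k(X)\neq 0$. But the actual argument is missing, and the detour through degree $k-1$ does not lead anywhere: the hypotheses (c),(d) live in degree $k$, and there is no control over $H^{k-1}$ or over $\ker\delta_1$ in that degree. Your claim that the constraints ``pin down $H^k_\rho(X)\cong\mathbb Z_p$'' is unjustified (in fact, under the contradiction hypothesis $u_k(X)=0$ one shows $H^k_\rho(X)=0$), and even if it held it would not by itself force $u_k(X)\neq 0$.

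What the paper does, and what you are missing, has two ingredients. First, a \emph{lifting step}: since $\indp(Y)=k$, not only is $u_k(Y)\neq 0$ but also $\delta_2 u_k(Y)=u_{k+1}(Y)=0$, so $u_k(Y)\in H^k_\rho(Y)$ lifts to a nonzero $\alpha\in H^k(Y)=\mathbb Z_p$. By (d), $f^*\alpha\neq 0$ in $H^k(X)=\mathbb Z_p$, and by naturality its image in $H^k_\rho(X)$ is exactly $u_k(X)$. Second, a \emph{transfer contradiction}: if $u_k(X)=0$, then the map $H^k(X)\to H^k_\rho(X)$ kills a generator and hence vanishes; since $H^{k+1}(X/\mathbb Z_p)=0$ this forces $H^k_\rho(X)=0$, and together with $H^{k+1}_\rho(X)=0$ the other Smith sequence makes $\pi^!:H^k(X)\to H^k(X/\mathbb Z_p)$ an isomorphism. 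The first sequence then makes $\pi^*$ an isomorphism as well, so $\pi^!\circ\pi^*$ is an isomorphism — but $\pi^!\circ\pi^*$ is multiplication by $p$, i.e.\ zero. This contradiction is the crux, and it is absent from your proposal.
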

\begin{proof} Put $k=2n+1$.

An equivariant map $f:X\to Y$ between free $\mathbb Z_p$-spaces induces
a map of factor spaces $X/\mathbb Z_p\to Y/\mathbb Z_p$ and we have two commutative diagrams (for $p$ odd) since Smith's sequences are functorial.
Consider one of these diagrams:
$$
\begin{CD}
H^k(X/\mathbb Z_p) @>\pi^*>> H^k(X) @>>> H^k_{\rho}(X) @>\delta_2>> H^{k+1}(X/\mathbb Z_p)\\
@AAA @AAf^*A @AAA @AAA \\
H^k(Y/\mathbb Z_p) @>>> H^k(Y) @>>> H^k_{\rho}(Y) @>>\delta_2> H^{k+1}(Y/\mathbb Z_p) \end{CD}
$$
Since $u_k(Y)\not=0$ and $\delta_2 u_k(Y)=0$, there is a nontrivial element $\alpha\in H^k(Y)=\mathbb Z_p$ which is mapped onto $u_k(Y)$.
Now $u_k(Y)$ is mapped to $u_k(X)$ and from assumption d) it follows that $0\not=f^*\alpha \in H^k(X)=\mathbb Z_p$
is mapped onto $u_k(X)$. Now we argue by contradiction. If $u_k(X)=0$ then $H^k(X) \to H^k_{\rho}(X)$ is trivial. Since $H^{k+1}(X/\mathbb Z_p)=0$,
we obtain $H^k_{\rho}(X)=0$. We have also $H^{k+1}_{\rho}(X)=0$, since $\dim X=k$. From Smith's sequence
$$
\begin{CD}
H^k_{\rho}(X) @>>> H^k(X) @>\pi^{!}>> H^k(X/\mathbb Z_p) @>\delta_1>> H^{k+1}_{\rho}(X)
\end{CD}
$$
we see that $\pi^{!}$ is an isomorphism and $H^k(X/\mathbb Z_p)=\mathbb Z_p$. From the first row of the above diagram it follows that
$\pi^*:H^k(X/\mathbb Z_p) \to H^k(X)$ is also an isomorphism, so $\pi^{!}\circ\pi^*$ is an isomorphism, but this contradicts with the fact that $\pi^{!}\circ\pi^*$ is the multiplication by $p$, i.e. zero homomorphism.
\end{proof}

Note that if $X$ is a free $\mathbb Z_p$-space where $p$ is an odd prime and $\dim X=2n+1$, then there exists an equivariant map $f: X\to \mathbb S^{2n+1}$.

\begin{corr}
Let $X$ be a free $\mathbb Z_p$-space where $p$ is an odd prime. Assume that $\dim X=2n+1$ and $H^{2n+1}(X;\mathbb Z_p)=\mathbb Z_p$, and denote by $f: X\to \mathbb S^{2n+1}$ an equivariant map. Then
$\indp(X) = 2n+1$ if and only if $f^*:H^{2n+1}(\mathbb S^{2n+1};\mathbb Z_p)\to H^{2n+1}(X;\mathbb Z_p)$ is an isomorphism.
\end{corr}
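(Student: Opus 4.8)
The plan is to derive both directions from the machinery already in place: the preceding Proposition supplies the ``if'' direction, and property~8 of $\indp(\,\cdot\,)$ supplies the ``only if'' direction. The one auxiliary fact I need is that $\indp(\mathbb S^{2n+1}) = 2n+1$ for \emph{any} free $\mathbb Z_p$-action on $\mathbb S^{2n+1}$; this follows at once from properties~6 and~7, since $\mathbb S^{2n+1}$ is connected, finite-dimensional, and has $H^i(\mathbb S^{2n+1};\mathbb Z_p)=0$ for $0<i<2n+1$ and for $i>2n+1$.

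For the ``if'' direction, suppose $f^*$ is an isomorphism. I would apply the preceding Proposition with $Y=\mathbb S^{2n+1}$ and $k=2n+1$: hypothesis (a) is the index computation just mentioned, (b) is $\dim X=2n+1$, (c) holds because $H^{2n+1}(X;\mathbb Z_p)=\mathbb Z_p$ by assumption while $H^{2n+1}(\mathbb S^{2n+1};\mathbb Z_p)=\mathbb Z_p$, and (d) is precisely the hypothesis. The Proposition then yields $\indp(X)=2n+1$.

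For the ``only if'' direction, suppose $\indp(X)=2n+1$. Then $\indp(X)=\indp(\mathbb S^{2n+1})=2n+1<\infty$, so property~8 applied to the equivariant map $f\colon X\to\mathbb S^{2n+1}$ shows that $f^*\colon H^{2n+1}(\mathbb S^{2n+1};\mathbb Z_p)\to H^{2n+1}(X;\mathbb Z_p)$ is nonzero. Since both groups are isomorphic to $\mathbb Z_p$, a nonzero homomorphism between them is an isomorphism, which finishes the proof.

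There is no real obstacle, as the Corollary is essentially a repackaging of the Proposition together with property~8; the only point requiring care is that the sphere is equipped with whatever free $\mathbb Z_p$-action makes the equivariant map $f$ available (guaranteed by the remark preceding the statement), and that the equality $\indp(\mathbb S^{2n+1})=2n+1$ is independent of this choice — which the argument via properties~6 and~7 above confirms.
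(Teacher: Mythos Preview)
Your proof is correct and matches the paper's intended approach: the paper states the Corollary without proof, treating it as an immediate consequence of the preceding Proposition (for the ``if'' direction) together with property~8 (for the ``only if'' direction), which is exactly what you spell out.
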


\bigskip

In what follows we will use cohomological index with integer coefficients. This index is defined via homological genus introduced by Albert Schwarz in ~\cite{Sv}.

\begin{df}
Let $X$ be a free $G$-space. We define $\ind X$, the integer cohomological index of $X$, as  its Schwarz's homological genus minus 1.
\end{df}

\begin{remark} {\rm 1) Using notation of Conner and Floyd~\cite{CF60} we have $\ind (\,\cdot\,)=\co-ind_{\mathbb Z}(\,\cdot\,)$.

2) $\ind (\,\cdot\,)$ is the largest cohomological index. In particular for $G=\mathbb Z_p$ we have $\indp(\,\cdot\,)\le \ind (\,\cdot\,)$. Also for any $G$
we have $\ind (\,\cdot\,)\le \tind (\,\cdot\,)$.

}
\end{remark}

This cohomological index possesses similar properties:

\vskip7pt

1. If there exists an equivariant map $X\to Y$ then $\ind (X)\leq \ind (Y)$.

2. If $X=A \bigcup B$ are open invariant subspaces, then $$\ind (X)\le \ind (A)+\ind (B)+1.$$

3. Tautness: If Y is a closed invariant subspace of $X$, then there exists an open invariant neighborhood of $Y$ such that
$\ind (Y)=\ind (U)$.

4. $\ind (X)>0$ if $X$ is connected.

5. If $X$ is either compact, or paracompact and finite dimensional then $\ind (X)<\infty$.

6. Assume that $X$ is connected and $H^i(X;{\mathbb Z})=0$ for $0<i<N$. Then $\ind (X)\ge N$.

7. Assume that $X$ is finite dimensional and $H^i(X;{\mathbb Z})=0$ for $i>d$. Then $\ind (X)\le d$.

8. If there exists an equivariant map $f:X\to Y$ and $\ind (X)= \ind (Y)=k<\infty$ then $0\not=f^*:H^k(Y;\mathbb Z)\to H^k(X;\mathbb Z)$.


\section{Dold theorems and generalizations}

Note that from properties~1 and 8 we immediately obtain that $\indp(\,\cdot\,)$ is stable, i.e.
$\indp\, X*\mathbb Z_p=\indp X+1$ (see \cite[Corollary~3.1]{Vo05}), so if $\indp X$ is finite then there exists no equivariant map $X*\mathbb Z_p\to X$.
As a direct consequence we have the following assertion:
\begin{prop}\label{partsol} Let $H$ be any topological group which has a nontrivial finite subgroup and
$X$ be either compact or paracompact and finite dimensional  space with a free action of $H$.
Then there exists no equivariant map $X*H\to X$.
\end{prop}
 An independent, alternative proof of this result is given by Passer in \cite{Passer}. One of his arguments is used below in a more simple proof of this proposition. Also we show below that proposition~\ref{partsol} follows directly from the paper of Dold~\cite{Dold}.

Proposition~\ref{partsol} gives the partial solution to the following conjecture of Baum, D\polhk{a}browski and Hajac:
\begin{conj}[\cite{BDH}, Conjecture 2.2] {\rm Let $X$ be a compact Hausdorff
space with a continuous free action of a nontrivial compact Hausdorff group $G$. Then,
for the diagonal action of $G$ on the join $X*G$, there does not exist an equivariant continuous map
$f : X * G \to X$.
}
\end{conj}

In \cite{ChPass}, Chirvasitu and Passer proposed a possible approach to the open part of Conjecture~4.1 (and its analogue for compact group actions on $C^*$-algebras) using the ideas of \cite{Passer} and \cite{Dold}. The case of certain compact \textit{quantum} group actions on $C^*$-algebras was considered by D\polhk{a}browski, Hajac, and Neshveyev in \cite{DHN}.

Let us deduce proposition~\ref{partsol} directly from Dold's~\cite{Dold} result and give one more simple proof.


\medskip

Dold in the proof of his last theorem in paper~\cite{Dold} showed for finite group $G\not=\{1\}$ that

\medskip

{\it If there exists an equivariant nullhomotopic map $X\to X$ of a free $G$-space $X$ to itself then for every free $G$-space $Y$ such that $Y/G$ is a finite cell complex there exists an equivariant map $Y\to X$.}

\medskip

\begin{proof}[Proposition~\ref{partsol} is a consequence of this Dold's result]
In particular we can take $Y=J^N(G)$ with any $N$ and obtain an equivariant map $J^N(G)\to X$. Thus if there exists an equivariant nullhomotopic selfmap $X\to X$ then $\tind X=\infty$. Also it follows that an equivariant map $Y\to X$ exists for any free $G$-space $Y$ such that $\tind Y<\infty$ because we can take the composition of maps $Y\to J^n(G)$ and $J^n(G)\to X$ where $\tind Y=n-1$.


Now if we assume equivariant map $F:X*G\to X$ then its composition with the natural embedding $X\subset X*G$ gives us the equivariant nullhomotopic map $X\to X$,  so $\tind X=\infty$.
Thus in case $\tind X<\infty$ a $G$-map $X*G\to X$ does not exist. In particular no such a $G$-map exists for compact $X$  (with no restriction on dimension)
and for finite dimensional paracompact $X$ since in these cases $\tind X<\infty$. 
\end{proof}

\medskip

Note that for existence of a $G$-map $X*G\to X$ some restrictions on $X$ are needed since there is an obvious equivariant homeomorphism $X*G\approx X$ for $X=J^{\infty}(G)$.
Note also that if a  $G$-map $X*G\to X$  exists then we can prove the equality $\tind X=\infty$ by the following simple argument used in \cite{Passer}\footnote{The authors thanks Benjamin Passer for his useful comments on the first version of this paper. The discussion with him led to our better understanding of the problem.}.
Taking the join with $G$ we obtain an equivariant map $X*G*G\to X*G$, and hence a map $X*G*G\to X$.
Iterating this procedure we obtain for any $n$ an equivariant map $X*J^n(G)\to X$ (this argument was used in \cite{Passer}). Since $J^n(G)$ is a $G$-subspace of
$X*J^n(G)$ we obtain an equivariant map $J^n(G)\to X$ for any $n$, and therefore $\tind X=\infty$.

\medskip

The following assertion clarifies the situation.

\begin{lemma}\label{dold} Let $X$ and $Y$ be free $G$-spaces. There exists an equivariant map $F:X*G\to Y$ if and only if there exists a nullhomotopic equivariant map $f:X\to Y$.
\end{lemma}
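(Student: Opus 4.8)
The plan is to work with the standard model of the join, writing points of $X*G$ as formal combinations $(1-t)\,x\oplus t\,g$ with $x\in X$, $g\in G$, $t\in[0,1]$, where the $G$-coordinate is forgotten at $t=0$ and the $X$-coordinate at $t=1$; concretely $X*G=(X\times G\times[0,1])/{\sim}$ with the quotient topology and diagonal action $h\cdot\bigl((1-t)x\oplus t g\bigr)=(1-t)(hx)\oplus t(hg)$. Two features of this model will do all the work: the map $x\mapsto(1-0)x\oplus 0\cdot g$ is a continuous $G$-equivariant embedding of $X$ into $X*G$, and for each $g$ the ``page'' $X*\{g\}$ is a cone on $X$ with apex $g$ (hence contractible), with $g\cdot(X*\{e\})=X*\{g\}$.

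For the implication ``$F$ exists $\Rightarrow$ $f$ exists'', I would take an equivariant $F\colon X*G\to Y$ and set $f:=F|_X\colon X\to Y$, which is equivariant since $X$ sits inside $X*G$ equivariantly. To see $f$ is nullhomotopic (as an ordinary map), I would push $X$ through the contractible page $X*\{e\}$: the formula $h(x,t):=F\bigl((1-t)x\oplus t\,e\bigr)$ is continuous in $(x,t)$ and gives a homotopy from $h(\cdot,0)=f$ to the constant map at $h(\cdot,1)=F(e)$.

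For the converse, I would start from an equivariant $f\colon X\to Y$ together with a (not necessarily equivariant) homotopy $H\colon X\times[0,1]\to Y$ from $f$ to a constant map with value $y_0\in Y$, and write down
\[
F\bigl((1-t)x\oplus tg\bigr)\;=\;g\cdot H(g^{-1}x,t).
\]
To see this descends to a well-defined continuous map on $X*G$ I would check compatibility with the two collapses: at $t=0$ it equals $g\cdot f(g^{-1}x)=f(x)$ by equivariance of $f$, which is independent of $g$; at $t=1$ it equals $g\cdot y_0$, which is independent of $x$. Hence the associated map $X\times G\times[0,1]\to Y$ factors through the quotient map, and continuity of $F$ is automatic because $X*G$ carries the quotient topology. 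Equivariance is then a one-line verification: $F\bigl((1-t)(hx)\oplus t(hg)\bigr)=(hg)\cdot H\bigl((hg)^{-1}hx,t\bigr)=h\cdot\bigl(g\cdot H(g^{-1}x,t)\bigr)$.

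The only point that requires care is the compatibility check in the converse direction — that $g\cdot H(g^{-1}x,t)$ is genuinely constant in $g$ when $t=0$ and constant in $x$ when $t=1$ — and this is precisely where equivariance of $f$ enters; everything else is formal, so I do not expect any real obstacle. It is worth noting that freeness of the two $G$-actions plays no role in the argument, and that the proof goes through verbatim whether $G$ is finite, discrete, or a topological group, provided the action $G\times Y\to Y$ is continuous.
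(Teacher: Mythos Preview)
Your proof is correct and follows essentially the same approach as the paper: in both directions the key formula is $F([x,t,g])=g\cdot H(g^{-1}x,t)$ (the paper writes $hf_t(h^{-1}x)$), with the same well-definedness checks at $t=0$ and $t=1$ and the same one-line equivariance computation. Your forward direction is slightly more explicit than the paper's --- you write down the nullhomotopy $h(x,t)=F((1-t)x\oplus t\,e)$ through the cone page $X*\{e\}$, whereas the paper simply asserts that $F|_X$ is nullhomotopic --- and your closing remark that freeness is not actually used is a valid observation not made in the paper.
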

\begin{proof} Given $F:X*G\to Y$ we can define the equivariant nullhomotopic map $f:X\to Y$ as a composition of the natural embedding $X\subset X*G$ with $F$, i.e. $f=F|_X$.

Now let $f:X\to Y$ be an equivariant nullhomotopic map. Elements of $X*G$ are written as $[x,t,h]$, where $x\in X$, $t\in [0,1]$, $h\in G$, and
 $[x,0,h]=[x,0,e]$ and $[x,1,h]=[x',1,h]$ for any $x,\,x'\in X$ and $h\in G$. Then $G$ acts on $X*G$ as $g[x,t,h]=[gx,t,gh]$, $g\in G$, and there is an equivariant inclusion of $X$ into $X*G$ given as $x\mapsto [x,0,e]$. Denote by $f_t$ a homotopy between the equivariant map $f=f_0$ and a constant map $f_1$ such that $f_1(X)=\{y\}$, where $y\in X$ is some point.
Define $F:X*G\to Y$ by the formula $F([x,t,h]):=hf_t(h^{-1}x)$. 

We have $F([x,0,h])=hf_0(h^{-1}x)=hf(h^{-1}x)=f(x)$  and $F([x,1,h])=hf_1(h^{-1}x)=hy$, so $F$ is correctly defined.
The following calculation
\begin{center}
$
F(g[x,t,h])=F([gx,t,gh])=ghf_t((gh)^{-1}gx)=ghf_t(h^{-1}x)=gF([x,t,h])
$
\end{center}
shows that $F$ is equivariant.
\end{proof}


\begin{remark} {\rm   Gottlieb~\cite{Gott} proved that the order of $G$ divides the Lefschetz number of an equivariant selfmap of a finitely dominated manifold with a free $G$-action. As a corollary~\cite[Corollary 4]{Gott} he obtained that no equivariant nullhomotopic selfmap of a finitely dominated manifold with a free $G$-action exists when $G\not=\{1\}$. Dold~\cite{Dold} deduced his more general result from the partial case that there exists no equivariant nullhomotopic selfmap of a sphere with a free $G$-action, and therefore if there  exists an equivariant map of spheres $\mathbb S^n\to \mathbb S^N$ with free $G$-actions then $n\le N$. Dold's argument (calculation of fixed point indices of a map of factor spaces) for a selfmap of a sphere is just the same as Gottlieb's for a selfmap of a compact manifold. As was mentioned above this result for spheres follows also from earlier stronger theorem of Krasnosel'skii~\cite{Krasn55, Krasn} who proved that the degree of an equivariant map of a sphere to itself is 1 modulo the order of $G$.
}
\end{remark}

Dold proved (see Remark  and Theorem on page 68 in~\cite{Dold}) the following:

\medskip

{\it If a map $f:X\to Y$ commutes with some free actions of a notrivial finite group $G$ on $X$ and $Y$ then
$$
\dim Y\ge 1+ {\rm Connectivity}(X).
$$

If $\dim Y= 1+ {\rm Connectivity}(X)<\infty$
then $f$ is not nullhomotopic (assuming $Y$ paracompact).

If $X$ is a finite-dimensional paracompact space and $f:X\to X$
is a nullhomotopic map then $f$ does not commute with any free $G$-action on $X$ for any finite group $G\not=\{1\}$.
}


\medskip

Here are homological versions of these results.

\begin{prop} Let $H$ be a subgroup of $G$ of prime order
$p$ and denote by $n=\indp X$ the cohomological index of a space $X$ with respect to $H$. If $0<n<\infty$, then $H^n(X;\mathbb Z_p)\not=0$ and the induced endomorphism $f^*:H^n(X;\mathbb Z_p)\to H^n(X;\mathbb Z_p)$ is nontrivial.
\end{prop}


\begin{prop}\label{acyclic}
Let $X$ and $Y$ be free $G$-spaces and $p$ is a prime divisor of the order of $G$. Assume that $\widetilde H^i(X;\mathbb Z_p)=0$ for $i\le n$
and that $f:X\to Y$ is an equivariant map.
Then $\tind Y\ge n+1$, in particular $\dim Y\ge n+1$.
If\, $\tind Y=n+1$, then $f^*:H^{n+1}(Y;\mathbb Z_p)\to H^{n+1}(X;\mathbb Z_p)$ is a nontrivial homomorphism.
\end{prop}
\begin{proof}
The problem reduces to the case $G=\mathbb Z_p$.
Then the first assertion follows from properties 1 and 6 of the index $\indp(\,\cdot\,)$ and the fact that $\indp Y\le \tind Y\le \dim Y$, where $Y$ is a free finite-dimensional $\mathbb Z_p$-space. In particular it follows that $\indp J^{n+1}(G)=\tind J^{n+1}(G)=n$.

For the proof of the second assertion note that there exists an equivariant map $h:Y\to J^{n+2}(G)$, and from property~8 it follows that the composition $h\circ f$ induces a nontrivial homomorphism of $(n+1)$-dimensional cohomology groups (with $\mathbb Z_p$-coefficients). Therefore $f^*\not=0$ in dimension $n+1$.

 Actually for the proof the first assertion it is easier to use more simple index $\inp(\,\cdot\,)$ which equals weak homological genus (introduced in \cite{Sv}) minus 1.

To define $\inp(X)$ for a paracompact free $\mathbb Z_p$-space $X$ consider an equivariant map $X\to J^{\infty}(\mathbb Z_p)=E_{\mathbb Z_p}$ and the map of factor-spaces $\mu:X/\mathbb Z_p\to B_{\mathbb Z_p}$. Recall that $H^i(B_{\mathbb Z_p};\mathbb Z_p)=\mathbb Z_p$. Say that $\inp(X)\ge n$ if $\mu^*\not= 0$ in
dimension $n$. It is easy to see that this assumption is equivalent to the assumption that $\mu^*:H^i(B_{\mathbb Z_p};\mathbb Z_p)\to H^i(X/\mathbb Z_p;\mathbb Z_p)$ is a monomorphism for $i\le n$. To prove proposition~\ref{acyclic} we need only to show that index $\inp(\,\cdot\,)$ satisfies properties 1, 5 and 6. The most complicated property~6 follows easily from the consideration of the spectral sequence of a covering $X\to X/\mathbb Z_p$ (from the spectral sequence of a bundle
$X\times_{\mathbb Z_p} E_{\mathbb Z_p}\to X/\mathbb Z_p$ with fiber $X$).
\end{proof}

\begin{remark} {\rm 1) Results like the first statement of proposition~\ref{acyclic} (generalizations can be obtained using \cite[Theorem~17]{Sv}) belong to A.S.\,Schwarz, since they follow trivially from \cite[Theorem~17]{Sv} and its corollaries 1 and 2 and properties of homological and weak homological genus introduced in \cite{Sv}.


2)  $\inp(\,\cdot\,)$ possesses all other properties except 2 and 8, and it can be shown that $\inp(\,\cdot\,)\le \indp(\,\cdot\,)$ with the equality for $p=2$, see \cite{Vo05}.

3) The definition of $\inp(\,\cdot\,)$ and its property 6 was rediscovered many times, see e.g. \cite{Lusk}, \cite{Vo79, Vo80}. In \cite{Liulevicius} Liulevicius actually used this index without naming it.

4) It is easy to deduce from proposition~\ref{acyclic} that $\inp\, \mathbb S^n=\indp\, \mathbb S^n=\tind \mathbb S^n=n$ and
$\inp\, J^n(G)=\indp\ J^n(G)=\tind J^n(G)=n-1$ where $p$ is a prime divisor of the order of $G$ and cohomological indicies are taken in respect with any subgroup of $G$ of prime order $p$.
}
\end{remark}




\section{Borsuk--Ulam type theorems for bounded spaces}

\begin{df}
We say that $h:X_0\to X$ is $n$-cohomological trivial ($n$-c.t. map) over $R$ if $h^*:H^n(X;R)\to H^n(X_0;R)$ is the trivial homomorphism of cohomology groups with coefficients in $R$ in dimension $n$.
In the case when $h$ is an embedding we call $X_0$ an $n$-c.t.-subspace of $X$ over $R$.
\end{df}

\begin{ex}
Let $X$ be a compact connected $(n+1)$-dimensional manifold with the connected boundary $\partial X=X_0$. Then $X_0$ is an
$n$-c.t.-subspace of $X$ over $\mathbb Z_2$, and if moreover $X$ is orientable then $X_0$  is an
$n$-c.t.-subspace of $X$ over $R$ for any $R$.
\end{ex}

Let a space $X_0$ be a subspace of $X$. Denote by $i:X_0\to X$ the inclusion. Suppose $X_0$ admits
a free action of a finite group $G$. (Actually, we  do not assume that $X$ is a $G$-space.)

These assumptions on $X$ and $X_0$ will be used in what follows.

\begin{theorem}\label{ZYP} Let $Y$ be a $G$-space, $Y_0$ its invariant closed subspace such that the action on $Y\setminus Y_0$ is free, and $f:X\to Y$ a
continuous map. Assume that

1) $n=\ind X_0 = \ind (Y\setminus Y_0)$,

2) 
$X_0$  is an
$n$-c.t.-subspace of $X$ over $\mathbb Z$,

3) $f|_{X_0}:X_0\to Y$ is equivariant,

then $f^{-1}(Y_0)\not=\emptyset$.
\end{theorem}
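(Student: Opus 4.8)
The plan is to argue by contradiction: assume $f^{-1}(Y_0)=\emptyset$, so that $f$ actually maps $X$ into the open invariant subspace $Y\setminus Y_0$, on which $G$ acts freely. First I would upgrade the equivariance of $f|_{X_0}$: since $f(X)\subset Y\setminus Y_0$ and $f|_{X_0}$ is equivariant, restricting to $X_0$ gives an equivariant map $g:=f|_{X_0}\colon X_0\to Y\setminus Y_0$ of free $G$-spaces. By property~1 of the integer cohomological index together with hypothesis~1 we get $\ind X_0=\ind(Y\setminus Y_0)=n$, so $g$ is a map between free $G$-spaces of equal finite index $n$. Hence property~8 applies and yields that $g^*\colon H^n(Y\setminus Y_0;\mathbb Z)\to H^n(X_0;\mathbb Z)$ is a \emph{nontrivial} homomorphism.

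The second half of the argument derives a contradiction from hypothesis~2, namely that $X_0$ is an $n$-c.t.-subspace of $X$ over $\mathbb Z$. The key point is the commutative square
\[
\begin{CD}
H^n(Y\setminus Y_0;\mathbb Z) @>f^*>> H^n(X;\mathbb Z)\\
@| @VV{i^*}V\\
H^n(Y\setminus Y_0;\mathbb Z) @>g^*>> H^n(X_0;\mathbb Z)
\end{CD}
\]
coming from $g=f\circ i$ (where now $f$ is viewed as a map $X\to Y\setminus Y_0$). Reading it along the top–right path, $g^*=i^*\circ f^*$; but $i^*\colon H^n(X;\mathbb Z)\to H^n(X_0;\mathbb Z)$ is the zero homomorphism by the definition of an $n$-c.t.-subspace. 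Therefore $g^*=0$, contradicting the nontriviality of $g^*$ established in the previous paragraph. Hence the assumption $f^{-1}(Y_0)=\emptyset$ is impossible, and $f^{-1}(Y_0)\neq\emptyset$.

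The only genuine subtlety — the step I would expect to need the most care — is making sure property~8 really applies, i.e. that all spaces in sight are honest free $G$-spaces with the index taken with respect to the same group $G$ and that the indices are finite. Finiteness of $\ind X_0$ (and of $\ind(Y\setminus Y_0)$) follows from hypothesis~1, which asserts they equal the integer $n$; freeness of the $G$-action on $Y\setminus Y_0$ is part of the hypothesis, and freeness on $X_0$ is assumed in the standing assumptions on $X,X_0$. One should also note that $f$ need not be equivariant on all of $X$ — and indeed $X$ is not even assumed to be a $G$-space — which is exactly why the diagram above uses $i^*$ purely as a (non-equivariant) cohomology homomorphism and only invokes equivariance on the subspace $X_0$, where the map $g$ lives. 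With those points checked, the contradiction is immediate and the proof is complete.
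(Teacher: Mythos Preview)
Your proof is correct and follows essentially the same route as the paper: argue by contradiction, so that $f|_{X_0}$ becomes an equivariant map $X_0\to Y\setminus Y_0$ of free $G$-spaces of equal index $n$, then observe that $(f|_{X_0})^*=i^*\circ f^*=0$ in degree $n$ by the $n$-c.t.\ hypothesis, contradicting property~8. Your additional remarks on why property~8 applies (finiteness, freeness, and that equivariance is only used on $X_0$) are accurate and make explicit what the paper leaves implicit.
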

\begin{proof}

We argue by contradiction. If $f^{-1}(Y_0)=\emptyset$ then $f$ maps $X$ into $Y\setminus Y_0$ and
$f|_{X_0}:X_0\to Y\setminus Y_0$ is equivariant. Since $f|_{X_0}=f\circ i$ and $i^*$ is trivial in dimension $n$, we obtain that
$(f|_{X_0})^*:H^n(Y\setminus Y_0; \mathbb Z)\to H^n(X_0; \mathbb Z)$ is trivial, a contradiction with property~8 of index.
\end{proof}

Note that if $\ind(Y\setminus Y_0)<n$ then by property~1 of index there exists no equivariant map from $X$ to $Y\setminus Y_0$, hence
$(f|_X)^{-1}(Y_0)\not=\emptyset$ (in this case we don't need the assumption 2).

\medskip

The theorem follows also from the following result.
\begin{theorem} Let $X_0$ be a free $G$-space, $i:X_0\subset X$. Let $K$ be a free $G$-space and $f:X\to K$ is a map equivariant on $X_0$.
Assume that $\ind  X_0=d$ and that $X_0$ is $d$-$\mathbb Z$-c.t.-subspace of $X$.
Then $\ind  K \ge d+1$.

If in addition $K$ is a connected closed orientable topological $(d+1)$-dimensional manifold or a pseudomanifold then for any $y\in K$ at least one of the sets
$f^{-1}(gy)$ for some $g\in G$ depending on $y$ is nonempty.
\end{theorem}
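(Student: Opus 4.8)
First I would prove the inequality $\ind K\ge d+1$. The map $f|_{X_0}=f\circ i\colon X_0\to K$ is an equivariant map of free $G$-spaces, so property~1 of the integer index gives $d=\ind X_0\le\ind K$, and it remains only to exclude $\ind K=d$. Assume $\ind K=d=\ind X_0<\infty$. Then property~8 of $\ind(\,\cdot\,)$ forces the homomorphism $(f|_{X_0})^{*}\colon H^{d}(K;\mathbb Z)\to H^{d}(X_0;\mathbb Z)$ to be nontrivial. But $(f|_{X_0})^{*}=i^{*}\circ f^{*}$ factors through $i^{*}\colon H^{d}(X;\mathbb Z)\to H^{d}(X_0;\mathbb Z)$, and the latter is the zero map because $X_0$ is a $d$-$\mathbb Z$-c.t.-subspace of $X$; hence $(f|_{X_0})^{*}=0$, a contradiction. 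Thus $\ind K\ge d+1$.

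For the second assertion I would argue by contradiction. Suppose $f^{-1}(gy)=\emptyset$ for every $g\in G$, i.e. $f(X)\cap Gy=\emptyset$. The orbit $Gy$ is finite, hence closed in $K$, so $K':=K\setminus Gy$ is an open $G$-invariant subspace on which $G$ acts freely; moreover $f$ maps $X$ into $K'$ and the restriction $f|_{X_0}\colon X_0\to K'$ is still equivariant. Thus $X_0\subset X$, $f$, and $K'$ satisfy all the hypotheses of the first assertion with $K'$ in place of $K$, and we conclude $\ind K'\ge d+1$.

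It remains to contradict this by showing $\ind K'\le d$. By property~7 of $\ind(\,\cdot\,)$ it suffices to check that $K'$ is finite-dimensional and $H^{i}(K';\mathbb Z)=0$ for all $i\ge d+1$. As an open subset of the $(d+1)$-dimensional $K$, the space $K'$ is paracompact of covering dimension $\le d+1$, so $H^{i}(K';\mathbb Z)=0$ for $i\ge d+2$ automatically; the crux is $H^{d+1}(K';\mathbb Z)=0$. In the manifold case this is classical: since $K$ is connected and $Gy\ne\emptyset$ (and $d\ge1$, as no nonempty space is $0$-$\mathbb Z$-c.t.), no component of $K'=K\setminus Gy$ is compact, and a non-compact $(d+1)$-manifold has trivial integral cohomology in degrees $\ge d+1$ (e.g. because it has the homotopy type of a complex of dimension $\le d$). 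In the pseudomanifold case I would use the long exact sequence of the pair $(K,K')$ together with excision: choosing a triangulation in which the points of $Gy$ are vertices and excising their small closed stars identifies $H^{j}(K,K';\mathbb Z)$ with $\bigoplus_{p\in Gy}\widetilde H^{j-1}(L_{p};\mathbb Z)$, where $L_{p}$ is the link of $p$, of dimension $\le d$; hence $H^{j}(K,K';\mathbb Z)=0$ for $j\ge d+2$ and the sequence collapses to
\[ H^{d+1}(K,K';\mathbb Z)\longrightarrow H^{d+1}(K;\mathbb Z)\longrightarrow H^{d+1}(K';\mathbb Z)\longrightarrow 0. \]
Since $K$ is a \emph{closed orientable} $(d+1)$-pseudomanifold, the first map is surjective — the fundamental cohomology class of $K$ is already detected by the local cohomology supported near the points of $Gy$ — so $H^{d+1}(K';\mathbb Z)=0$. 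Granting this, property~7 gives $\ind K'\le d$, contradicting $\ind K'\ge d+1$; this completes the proof.

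The step I expect to be the genuine difficulty is exactly this vanishing $H^{d+1}(K\setminus Gy;\mathbb Z)=0$ for a \emph{pseudomanifold} with \emph{integer} coefficients, i.e. the surjectivity of $H^{d+1}(K,K\setminus Gy;\mathbb Z)\to H^{d+1}(K;\mathbb Z)$: it must be extracted from orientability of $K$ even when the points of $Gy$ are singular points, so one has to keep track of the possible torsion in $H^{d+1}(K;\mathbb Z)$ (by universal coefficients this is $\operatorname{Ext}(H_{d}(K;\mathbb Z),\mathbb Z)$) and be precise about which notion of ``orientable pseudomanifold'' is in force (e.g.\ normality of the links). For this cohomological input one may wish to appeal to the results of \cite{Mus} and \cite{MusVo}; in the topological manifold case no such issue arises.
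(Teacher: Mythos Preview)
Your argument is correct and follows essentially the same route as the paper's own proof: for the first assertion you use property~1 to get $\ind K\ge d$ and then property~8 together with the factorization $(f|_{X_0})^*=i^*\circ f^*$ and the $d$-c.t.\ hypothesis to rule out equality; for the second you pass to $K'=K\setminus Gy$, apply the first assertion, and contradict it via the vanishing of top cohomology of $K'$ and property~7. The paper is terser on the cohomological vanishing for $K'$ (it simply says ``$K\setminus Gy$ is an open manifold, hence $H^j(K\setminus Gy;\mathbb Z)=0$ for $j\ge d+1$'') and adds the parenthetical observation that $H^{d+1}(K';\mathbb Z)=0$ alone already contradicts $\ind K'=d+1$ by property~8, which lets one bypass checking all $j\ge d+1$; your more detailed discussion of the pseudomanifold case and of the possible subtleties there is a reasonable expansion of what the paper leaves implicit.
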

\begin{proof} The map $f\circ i:X\to K$ is equivariant, so $\ind  K \ge \ind X= d$. Since $(f\circ i)^*=i^*\circ f^*=0$ in
dimension $d$, it follows from property~8 of index that $\ind K \not= d$. Therefore $\ind K \ge d+1$.

When $K$ is a manifold we argue by contradiction. Let $y\in K$ be a point such that $f^{-1}(Gy)=\emptyset$ where $Gy$ is the orbit of the point $y$.
Then $f$ maps $X$ to
$K\setminus Gy$ and $f\circ i:X_0\to K\setminus Gy$ is equivariant. Applying the first statement we obtain that
$\ind (K\setminus Gy) \ge d+1$.
On the other hand $K\setminus Gy$ is an open manifold, hence $H^j(K\setminus Gy;\mathbb Z)=0$ for $j\ge d+1$, and from property~7
of index we obtain $\ind (K\setminus Gy) < d+1$. (Also $H^{d+1}(K\setminus Gy;\mathbb Z)=0$ contradicts with $\ind (K\setminus Gy) = d+1$ by
property~8.)
\end{proof}

\begin{df}  Let $Y$ be a $G$-space. A point $y\in Y$ is {\it a fixed point} of the action if $gy=y\,\,\,\, \forall g\in G$.
Denote the set of fixed points by $Y^G$. We say that the action of $G$ on $Y$ is {\it semifree} if
$Y\setminus Y^G\not=\emptyset$ and $Y^G\not=\emptyset$ and $G$ acts freely on $Y\setminus Y^G$.
\end{df}

Assume that $Y$ is a semifree $G$-space and $f:X\to Y$ a
continuous map. In this case directly from theorem~\ref{ZYP} we obtain:

\begin{theorem} Let $Y$ is a semifree $G$-space, $f:X\to Y$ a
continuous map.
Assume that

1) $n=\ind X_0=\ind (Y\setminus Y^G)$,

2) $X_0$ is $n$-c.t.-subspace of $X$ over $\mathbb Z$.

3) $f|_{X_0}:X\to Y$ is equivariant.

\noindent
Then $f^{-1}(Y^G)\not=\emptyset$.
\end{theorem}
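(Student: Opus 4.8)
The plan is to derive this statement as an immediate specialization of Theorem~\ref{ZYP}, taking $Y_0:=Y^G$. So essentially the only work is to check that the hypotheses of the semifree theorem translate verbatim into those of Theorem~\ref{ZYP} for this choice of $Y_0$, and then read off the conclusion.

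First I would observe that $Y^G$ is an invariant subspace of $Y$: it is fixed pointwise, so $g\,Y^G=Y^G$ for every $g\in G$. It is also closed, since (under the standing Hausdorff assumption on the spaces considered here) $Y^G=\bigcap_{g\in G}\{y\in Y: gy=y\}$ is a finite intersection of closed sets. The defining property of a semifree action is precisely that $Y^G\neq\varnothing$, $Y\setminus Y^G\neq\varnothing$, and $G$ acts freely on $Y\setminus Y^G$; hence $Y\setminus Y_0=Y\setminus Y^G$ carries a free $G$-action, which is exactly the hypothesis on $Y_0$ required by Theorem~\ref{ZYP}.

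Next, the remaining hypotheses match on the nose: assumption~1) is the equality $n=\ind X_0=\ind(Y\setminus Y^G)=\ind(Y\setminus Y_0)$; assumption~2) is the statement that $X_0$ is an $n$-c.t.-subspace of $X$ over $\mathbb Z$; and assumption~3) is that $f|_{X_0}\colon X_0\to Y$ is equivariant. Applying Theorem~\ref{ZYP} we conclude $f^{-1}(Y_0)\neq\varnothing$, that is, $f^{-1}(Y^G)\neq\varnothing$, which is the assertion.

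I do not expect any genuine obstacle here; the statement is just a reformulation of Theorem~\ref{ZYP} in the language of semifree actions, and the single point needing (routine) verification is that $Y^G$ is closed, which holds for Hausdorff $Y$. If one preferred a self-contained argument, one could simply repeat the proof of Theorem~\ref{ZYP}: assuming $f^{-1}(Y^G)=\varnothing$, the map $f$ sends $X$ into $Y\setminus Y^G$, and since $f|_{X_0}=f\circ i$ factors through the inclusion $i\colon X_0\hookrightarrow X$ whose induced map on $H^n(\,\cdot\,;\mathbb Z)$ vanishes, the composite $(f|_{X_0})^*\colon H^n(Y\setminus Y^G;\mathbb Z)\to H^n(X_0;\mathbb Z)$ is zero, contradicting property~8 of the index, both indices being equal to $n$ by assumption~1).
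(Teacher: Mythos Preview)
Your proposal is correct and matches the paper's approach exactly: the paper simply states that the result follows ``directly from theorem~\ref{ZYP}'', and your argument is precisely that specialization with $Y_0=Y^G$, together with the routine check (which the paper omits) that $Y^G$ is closed and invariant so that the hypotheses of Theorem~\ref{ZYP} are met.
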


We can apply this result in the case when $X=M$ is a manifold and $X_0=\partial M$ is its boundary.

\begin{theorem} Let $M^n$ be a compact connected orientable manifold (or a pseudomanifold) with the connected boundary $\partial M$,
and assume that $G$ can act freely on $\partial M$. Consider a continuous mapping $f:M\to {\Bbb R}^n$ such that
$f|_{\partial M}:\partial M\to {\Bbb R}^n$ is an equivariant map, where $\mathbb R^n$ is considered as a semifree $G$-space with the unique
fixed point $0\in \mathbb R^n$, the origin. If\, $\ind \partial M=n-1$ then the zero set $Z_f=f^{-1}(0)$ is not empty.
\end{theorem}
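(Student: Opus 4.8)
The plan is to obtain the statement as an immediate application of the preceding theorem (the semifree version) to the pair $(X,X_0)=(M,\partial M)$ and the target $Y=\mathbb R^n$ equipped with its given semifree $G$-action. Since the origin is the unique fixed point we have $Y^G=\{0\}$ and $Y\setminus Y^G=\mathbb R^n\setminus\{0\}$, so $f^{-1}(Y^G)=Z_f$, and it suffices to verify the three hypotheses of that theorem (in which the free variable ``$n$'' will here be $n-1$, since $\partial M$ is $(n-1)$-dimensional).

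First I would compute $\ind(\mathbb R^n\setminus\{0\})$. For $n\ge 2$ the punctured space $\mathbb R^n\setminus\{0\}$ is a connected, paracompact, finite-dimensional free $G$-space whose cohomology is that of $\mathbb S^{n-1}$; thus $H^i(\mathbb R^n\setminus\{0\};\mathbb Z)=0$ for $0<i<n-1$ and also for $i>n-1$. Properties~6 and 7 of the integer cohomological index then give $\ind(\mathbb R^n\setminus\{0\})=n-1$. Combined with the hypothesis $\ind\partial M=n-1$ this is precisely the first condition of the preceding theorem, namely $n-1=\ind X_0=\ind(Y\setminus Y^G)$.

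The second condition requires $\partial M$ to be an $(n-1)$-c.t.-subspace of $M$ over $\mathbb Z$. This is exactly the Example stated earlier in this section, applied to the compact connected orientable $n$-manifold $M$ with connected boundary $\partial M$: the long exact cohomology sequence of the pair $(M,\partial M)$ together with Lefschetz duality forces $i^*\colon H^{n-1}(M;\mathbb Z)\to H^{n-1}(\partial M;\mathbb Z)$ to vanish, and the pseudomanifold case runs identically once the corresponding duality is available. The third condition, that $f|_{\partial M}\colon\partial M\to\mathbb R^n$ be equivariant, is part of the hypothesis. The preceding theorem now yields $Z_f=f^{-1}(0)=f^{-1}(Y^G)\ne\emptyset$.

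There is essentially no serious obstacle here: all the content sits in the preceding theorem (hence, ultimately, in property~8 of the index) and in the cohomology computation behind the Example. The only points needing a little care are (a) confirming that the vanishing in the Example extends to the pseudomanifold setting invoked in the statement, and (b) the degenerate case $n=1$, where $\mathbb R^n\setminus\{0\}$ is disconnected, so the index computation must be carried out by hand (or the conclusion simply replaced by the intermediate value theorem). Neither affects the structure of the argument.
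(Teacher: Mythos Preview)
Your proposal is correct and follows exactly the paper's approach: the theorem is stated as a direct application of the preceding semifree theorem to $(X,X_0)=(M,\partial M)$ and $Y=\mathbb R^n$, with the $(n-1)$-c.t.\ condition supplied by the Example and the index computation for $\mathbb R^n\setminus\{0\}$ being straightforward. Your write-up actually spells out more of the hypothesis verification than the paper does.
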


Here we consider any semifree action of $G$ on $\mathbb R^n$ with the unique fixed point $0\in \mathbb R^n$, the origin.
Such an action exists since we assume that $G$ can act freely on ${\Bbb S}^{n-1}$.
For example we can take the action which is obtained by linearity from the $G$-action on ${\Bbb S}^{n-1}$.

As a partial case of the previous assertion we obtain:

\begin{corr} Let $M^n$ be a compact connected orientable manifold (or a pseudomanifold) with the boundary $\partial M$ which is homeomorphic to the sphere
${\Bbb S}^{n-1}$, and assume that $G$ can act freely on $\partial M\approx {\Bbb S}^{n-1}$. Consider a continuous mapping $f:M\to {\Bbb R}^n$ such that
$f|_{\partial M}:\partial M\to {\Bbb R}^n$ is an equivariant map, where $\mathbb R^n$ is considered as a semifree $G$-space with the unique
fixed point $0\in \mathbb R^n$, the origin. Then the zero set $Z_f=f^{-1}(0)$ is not empty.
\end{corr}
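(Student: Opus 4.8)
The plan is to read this statement off from the preceding theorem: the two results have the same conclusion and the same list of hypotheses, except that here $\partial M$ is assumed homeomorphic to $\mathbb S^{n-1}$ rather than merely connected, while the numerical assumption $\ind\partial M=n-1$ has been dropped. So the entire proof amounts to checking that $\ind\partial M=n-1$ holds automatically once $\partial M\approx\mathbb S^{n-1}$ carries a free $G$-action. Everything else — the existence of the semifree $G$-action on $\mathbb R^n$ (and with it the identity $\ind(\mathbb R^n\setminus 0)=n-1$), and the fact that orientability makes $\partial M$ an $n$-c.t.-subspace of $M$ over $\mathbb Z$ (Example~5.1) — is already packaged into the preceding theorem.

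First I would record that for $n\geq 2$ the sphere $\mathbb S^{n-1}$ is connected, hence $\partial M$ is connected and the connectedness hypothesis on the boundary in the preceding theorem is met; the degenerate case $n=1$ (where $\mathbb S^0$ carries the free $\mathbb Z_2$-action and is mapped equivariantly to $\mathbb R$) I would dispose of in one line, since such a map takes a value together with its negative and connectedness of $M$ then forces $0$ into the image.

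The key step is the index computation. Since $\partial M$ is a connected free $G$-space with $H^i(\partial M;\mathbb Z)\cong H^i(\mathbb S^{n-1};\mathbb Z)=0$ for $0<i<n-1$, property~6 of the integer cohomological index gives $\ind\partial M\geq n-1$; since $\partial M$ is finite-dimensional with $H^i(\partial M;\mathbb Z)=0$ for all $i>n-1$, property~7 gives $\ind\partial M\leq n-1$. Hence $\ind\partial M=n-1$, and the preceding theorem applies verbatim, yielding $Z_f=f^{-1}(0)\neq\emptyset$.

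I do not expect a genuine obstacle here: the statement is a specialization, and the only mild point is to make sure the vanishing-of-cohomology criteria are invoked with integer coefficients — which is exactly the content of properties~6 and~7 of $\ind(\cdot)$ — and to keep the small-$n$ edge case in mind.
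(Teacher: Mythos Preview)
Your proposal is correct and matches the paper's approach: the corollary is stated as a direct special case of the preceding theorem, and the only content to supply is the verification that $\ind\partial M=n-1$ when $\partial M\approx\mathbb S^{n-1}$, which you carry out via properties~6 and~7 of the integer cohomological index. The paper leaves this computation implicit (and also mentions an alternative route through Proposition~5.1 on equivariant-degree grounds), while you spell it out and handle the $n=1$ edge case explicitly; one minor slip is that $\partial M$ is an $(n-1)$-c.t.-subspace rather than an $n$-c.t.-subspace, but this is already absorbed into the preceding theorem you are invoking.
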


This follows also from

\begin{prop} If there is an embedding $i:{\mathbb S}^{d-1}\to X$ such that
\begin{center}
${\rm Im}\, i^*\cap \{k\in \mathbb Z\,\,|\,\,k\equiv 1\mod |G|\}=\emptyset$,
\end{center}
where
$i^*:H^{d-1}(X;\mathbb Z)\to H^{d-1}({\mathbb S}^{d-1};\mathbb Z)$,
and $f:X\to \mathbb R^d$ a continuous map such that $f|_{{\mathbb S}^{d-1}}:{\mathbb S}^{d-1}\to {\mathbb R}^{d}$ is equivariant, then
$0\in f(X)$.
\end{prop}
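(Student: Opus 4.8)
The plan is to argue by contradiction: from the assumption $0\notin f(X)$ I would produce an equivariant map $\phi$ of the sphere ${\mathbb S}^{d-1}$ into the free $G$-space ${\mathbb R}^d\setminus\{0\}$ which factors through $X$, and then play the cohomological indices of these two model spaces against that factorization via property~8.

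So I would suppose $0\notin f(X)$. Since the action on ${\mathbb R}^d$ is semifree with $0$ its unique fixed point, $Y:={\mathbb R}^d\setminus\{0\}$ is a paracompact, finite-dimensional, \emph{free} $G$-space with $H^{*}(Y;\mathbb Z)\cong H^{*}({\mathbb S}^{d-1};\mathbb Z)$, and $f$ corestricts to a continuous map $\bar f\colon X\to Y$. The composite $\phi:=\bar f\circ i=f|_{{\mathbb S}^{d-1}}\colon{\mathbb S}^{d-1}\to Y$ is equivariant by hypothesis (here ${\mathbb S}^{d-1}$ carries the free $G$-action in use throughout this section). Now I would fix a prime $p$ dividing $|G|$, a subgroup $H\le G$ of order $p$, and restrict every action to $H$. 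Since $Y$ and ${\mathbb S}^{d-1}$ are connected and $H^{i}(Y;\mathbb Z_p)=H^{i}({\mathbb S}^{d-1};\mathbb Z_p)=0$ for $0<i<d-1$ and for $i>d-1$, properties~6 and~7 of $\indp(\,\cdot\,)$ give $\indp({\mathbb S}^{d-1})=\indp(Y)=d-1$, so property~8 applies to $\phi$ and yields $0\neq\phi^{*}\colon H^{d-1}(Y;\mathbb Z_p)\to H^{d-1}({\mathbb S}^{d-1};\mathbb Z_p)$.

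Transporting this to integer coefficients is the heart of the matter. Both $H^{d-1}(Y;\mathbb Z)$ and $H^{d-1}({\mathbb S}^{d-1};\mathbb Z)$ are infinite cyclic, so $\phi^{*}$ over $\mathbb Z$ is multiplication by an integer $N$ (the degree of $\phi$); by naturality of mod-$p$ reduction the previous step says $N\not\equiv 0\pmod p$, and letting $p$ range over all prime divisors of $|G|$ gives $\gcd(N,|G|)=1$, in particular $N\neq 0$. By functoriality $\phi^{*}=i^{*}\circ\bar f^{*}$ on integral cohomology, so $N\mathbb Z={\rm Im}\,\phi^{*}\subseteq{\rm Im}\,i^{*}$; writing the subgroup ${\rm Im}\,i^{*}\subseteq H^{d-1}({\mathbb S}^{d-1};\mathbb Z)=\mathbb Z$ as $m\mathbb Z$, I get $m\mid N$, hence $\gcd(m,|G|)=1$, hence $m$ is a unit modulo $|G|$; therefore ${\rm Im}\,i^{*}=m\mathbb Z$ contains an integer congruent to $1$ modulo $|G|$, contradicting the hypothesis. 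This would finish the proof, showing $0\in f(X)$.

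The step that needs care — and essentially the only one — is exactly this last reduction: property~8 over $\mathbb Z$ alone yields only ${\rm Im}\,i^{*}\neq 0$, which is too weak, so one must invoke it over $\mathbb Z_p$ for \emph{every} prime $p\mid|G|$ and then carry the resulting divisibility information across $\phi^{*}=i^{*}\circ\bar f^{*}$; this is the index-theoretic incarnation of the classical fact (underlying Krasnosel'skii's degree theorem) that an equivariant map of free $G$-spheres has degree prime to $|G|$. I note that in the situations motivating the statement (Corollary~5.8, Theorem~5.7) one has ${\rm Im}\,i^{*}=0$, and there the weaker conclusion $N\neq 0$ already produces the contradiction, so the refinement is not needed in those cases.
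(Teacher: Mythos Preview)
Your proof is correct, but takes a different route from the paper's. The paper derives the proposition as a special case of the next, more general theorem, whose proof is a two-line appeal to Krasnosel'skii's theorem: if $0\notin f(X)$ then $\phi=f\circ i:\mathbb S^{d-1}\to\mathbb R^d\setminus 0$ is equivariant, hence $\deg\phi\equiv 1\pmod{|G|}$; since $\deg\phi=i^*(\bar f^*(1))\in\mathrm{Im}\,i^*$, this immediately contradicts the hypothesis.

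You avoid citing Krasnosel'skii and stay entirely within the cohomological-index machinery of Section~4: property~8 of $\indp$ for each prime $p\mid |G|$ yields only the weaker conclusion $\gcd(\deg\phi,\,|G|)=1$, and you then use the extra (elementary) step that $\mathrm{Im}\,i^*=m\mathbb Z$ is a \emph{subgroup} of $\mathbb Z$, so $m\mid\deg\phi$ forces $\gcd(m,|G|)=1$ and hence $m\mathbb Z$ meets the residue class $1\bmod|G|$. The paper's approach is shorter and gives the sharper intermediate congruence $\deg\phi\equiv 1\pmod{|G|}$; yours is more self-contained relative to the paper's own toolkit and makes explicit why the index framework alone---without the full Krasnosel'skii congruence---still suffices for this particular statement.
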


Actually a more general assertion holds

\begin{theorem}  Assume that there is a map $j:{\mathbb S}^{d-1}\to X$ such that
\begin{center}
${\rm Im}\, j^*\cap \{k\in \mathbb Z\,|\,k\equiv 1\mod |G|\}=\emptyset$,
\end{center}
where
$j^*:H^{d-1}(X;\mathbb Z)\to H^{d-1}({\mathbb S}^{d-1};\mathbb Z)$ is induced by $j$,
and let $f:X\to \mathbb R^d$ be a continuous map such that $f\circ j:{\mathbb S}^{d-1}\to {\mathbb R}^{d}$ is equivariant. Then
$0\in f(X)$.
\end{theorem}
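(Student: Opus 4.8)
The plan is to argue by contradiction and reduce everything to Krasnosel'skii's theorem on the degrees of equivariant self-maps of a free $G$-sphere. Suppose $0\notin f(X)$, so that $f$ takes values in $\mathbb{R}^d\setminus\{0\}$. The semifree $G$-action on $\mathbb{R}^d$ has $0$ as its only fixed point, and (being the radial extension of the given free action on $\mathbb{S}^{d-1}$) it preserves the unit sphere $\mathbb{S}^{d-1}$, carrying there the given free action, and commutes with scaling; hence it is norm-preserving and the radial retraction $r:\mathbb{R}^d\setminus\{0\}\to\mathbb{S}^{d-1}$, $r(x)=x/\|x\|$, is equivariant. Put $g:=r\circ f:X\to\mathbb{S}^{d-1}$ and
\[
\varphi:=g\circ j=r\circ(f\circ j):\mathbb{S}^{d-1}\longrightarrow\mathbb{S}^{d-1}.
\]
Since $f\circ j$ and $r$ are equivariant, $\varphi$ is an equivariant self-map of the free $G$-sphere $\mathbb{S}^{d-1}$.

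By Krasnosel'skii's theorem \cite{Krasn}, $\deg\varphi\equiv 1\pmod{|G|}$. On the other hand I claim $\deg\varphi\in{\rm Im}\,j^*$. Indeed, fix a generator $u$ of $H^{d-1}(\mathbb{S}^{d-1};\mathbb{Z})\cong\mathbb{Z}$; from $\varphi=g\circ j$ we get, in dimension $d-1$, that $\varphi^*=j^*\circ g^*$, where $g^*:H^{d-1}(\mathbb{S}^{d-1};\mathbb{Z})\to H^{d-1}(X;\mathbb{Z})$ and $j^*:H^{d-1}(X;\mathbb{Z})\to H^{d-1}(\mathbb{S}^{d-1};\mathbb{Z})$, so that
\[
(\deg\varphi)\,u=\varphi^*(u)=j^*\big(g^*(u)\big)\in{\rm Im}\,j^*.
\]
Under the identification $H^{d-1}(\mathbb{S}^{d-1};\mathbb{Z})\cong\mathbb{Z}$ (via $u\mapsto 1$) this says $\deg\varphi\in{\rm Im}\,j^*$. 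Hence $\deg\varphi$ belongs to ${\rm Im}\,j^*\cap\{k\in\mathbb{Z}\,|\,k\equiv 1\pmod{|G|}\}$, which is empty by hypothesis --- a contradiction. Therefore $0\in f(X)$; the preceding Proposition is the special case in which $j=i$ is an embedding.

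A few words on where the work is. The only geometric ingredient is the equivariance of $r$, which rests on the specific (``obvious'', radial) form of the semifree $G$-action on $\mathbb{R}^d$; everything after the construction of $\varphi$ is formal. The conceptual point --- and the step I expect to matter most --- is to see that Krasnosel'skii's congruence $\deg\varphi\equiv 1\pmod{|G|}$ should be paired with the purely cohomological observation $\deg\varphi\in{\rm Im}\,j^*$ coming from the factorization $\varphi^*=j^*\circ g^*$; it is this combination that lets the weak hypothesis ``${\rm Im}\,j^*$ avoids the residue class of $1$'' play the role of the stronger $(d-1)$-c.t.\ condition under which Theorem~5.3 applies directly. (For $d\ge 2$ the group $H^{d-1}(\mathbb{S}^{d-1};\mathbb{Z})$ is $\mathbb{Z}$ and the identification is canonical up to sign; the degenerate case $d=1$, where necessarily $G=\mathbb{Z}_2$, is immediate.)
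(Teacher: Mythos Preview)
Your proof is correct and follows essentially the same approach as the paper's: argue by contradiction, observe that $f\circ j$ then lands in $\mathbb R^d\setminus\{0\}$ and is equivariant, invoke Krasnosel'skii to get degree $\equiv 1\pmod{|G|}$, and contradict the hypothesis since the degree lies in $\mathrm{Im}\,j^*$ by the factorization through $X$. Your version is simply more explicit (spelling out the equivariant radial retraction and the identity $\varphi^*=j^*\circ g^*$) where the paper is terse.
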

\begin{proof} We argue by a contradiction. If $0\notin f(X)$, then $f\circ j:{\mathbb S}^{d-1}\to \mathbb R^d\setminus 0$ is an equivariant map, hence its degree equals 1 modulo $|G|$ (see e.g. \cite{Krasn}), but this contradicts with the assumption ${\rm Im}\, j^*\cap \{k\in \mathbb Z\,|\,k\equiv 1\mod |G|\}=\emptyset$.
\end{proof}


\section{Bourgin--Yang type theorems}

\begin{df}
Let $X$ be a space and $X_0$ its subspace which is a $G$-space.
A {\it camomile} $C$ is a $G$-space for which there is an embedding $X\subset C$ such that
$C=GX$, induced embedding $X_0\subset C$ is equivariant, the action of $G$ on $C\setminus X_0$ is free, and
$C\setminus X_0=\bigcup\limits_{g\in G} g(X\setminus X_0)$.
\end{df}

\begin{ex} {\rm
If $X$ is a cone over $X_0$, i.e. $X=X_0*\pt$, then $C=X_0*G$.}
\end{ex}

\vskip3pt

Let $Y$ be a $G$-space and $Y_0$ its invariant subspace such that the $G$-action on $Y\setminus Y_0$ is free.
From the definition of camomile we easily obtain the following assertion.

\begin{theorem} There exists $f:X\to Y$ equivariant on $X_0$ and such that
$f^{-1}(Y_0)=\emptyset$ if and only if
there exists an equivariant map $C\to Y\setminus Y_0$ where $C$ is the camomile associated
with the embedding $X_0\subset X$ of the $G$-space $X_0$ into $X$.
\end{theorem}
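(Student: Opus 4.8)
The plan is to prove the two implications separately. The reverse implication is immediate, so the content is in the forward direction, where an equivariant map $C\to Y\setminus Y_0$ must be assembled out of $|G|$ translated copies of $f$, glued along the invariant center $X_0$.

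For the implication ``$\Leftarrow$'', suppose $F:C\to Y\setminus Y_0$ is equivariant. Since $X\subset C$ and the induced inclusion $X_0\subset C$ is equivariant, the restriction $f:=F|_X:X\to Y$ is a continuous map whose restriction to $X_0$ is equivariant, and $f(X)=F(X)\subset Y\setminus Y_0$, so $f^{-1}(Y_0)=\emptyset$. This half requires no further work.

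For the implication ``$\Rightarrow$'', start from $f:X\to Y$ with $f|_{X_0}$ equivariant and $f^{-1}(Y_0)=\emptyset$; the latter condition says precisely that $f$ takes values in $Y\setminus Y_0$. Using the orbit decomposition $C=GX$, I define $F:C\to Y\setminus Y_0$ by $F(gx):=g\,f(x)$ for $x\in X$, $g\in G$; the image lies in $Y\setminus Y_0$ because $Y\setminus Y_0$ is $G$-invariant. Well-definedness is checked in two cases. If a point $z\in C$ lies in $X_0$, any representation $z=gx$ with $x\in X$ forces $x=g^{-1}z\in X_0$ since $X_0$ is $G$-invariant, and then $g\,f(x)=g\,f(g^{-1}z)=g\,g^{-1}f(z)=f(z)$ by equivariance of $f|_{X_0}$, so the value does not depend on the representation. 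If a point $z\in C\setminus X_0$ is written as $z=g_1x_1=g_2x_2$ with $x_i\in X$, then $x_i\notin X_0$ (otherwise $z\in X_0$), so $x_i\in X\setminus X_0\subset C\setminus X_0$; since the translates $g(X\setminus X_0)$ are disjoint ``petals'' attached to the common center $X_0$ and permuted freely by $G$ (this is exactly what the camomile axioms $C=GX$, $C\setminus X_0=\bigcup_{g\in G}g(X\setminus X_0)$ and freeness on $C\setminus X_0$ encode), we get $g_1=g_2$ and hence $x_1=x_2$, so $g_1f(x_1)=g_2f(x_2)$. Equivariance of $F$ is then the identity $F(h\cdot gx)=F((hg)x)=(hg)f(x)=h\bigl(g\,f(x)\bigr)=hF(gx)$, and continuity follows because $\{gX\}_{g\in G}$ is a finite closed cover of $C$ on each member of which $F$ agrees with the continuous map $z\mapsto g\,f(g^{-1}z)$.

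I expect the only genuinely delicate point to be the well-definedness of $F$ on $C\setminus X_0$, i.e. the fact that distinct translates $g(X\setminus X_0)$ meet only inside $X_0$; this is precisely the structural content that the definition of a camomile is designed to capture, and everything else (landing in $Y\setminus Y_0$, continuity via the closed cover $C=GX$, equivariance, and the converse direction as mere restriction to $X$) is routine.
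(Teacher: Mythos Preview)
Your argument is correct and is exactly the natural proof; the paper itself does not spell out a proof at all, merely remarking that the assertion follows ``easily from the definition of camomile,'' and what you have written is precisely how one fills in those details. Your identification of the one nontrivial point --- that the petals $g(X\setminus X_0)$ are pairwise disjoint so that $F(gx):=g\,f(x)$ is well-defined off $X_0$ --- is apt: strictly speaking the paper's abstract definition only writes $C\setminus X_0=\bigcup_{g}g(X\setminus X_0)$ as a union, but the example $C=X_0*G$ and the explicit construction via the $\mathbb R[G]$-coordinate make clear that a disjoint union is intended (and without it the theorem would fail), so your reading is the correct one.
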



\begin{theorem} Let $X_0$ be an $n$-c.t.-subspace of $X$ over $\mathbb Z$ such that $\ind X_0=n$.
Then $\ind C=n+1$.
\end{theorem}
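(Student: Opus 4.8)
The plan is to prove the two inequalities $\ind C\ge n+1$ and $\ind C\le n+1$ separately, using nothing beyond properties~1, 2, 3 and~8 of the integer cohomological index stated above, together with $\ind(\,\cdot\,)\le\tind(\,\cdot\,)$ and $\tind J^1(G)=0$. For the lower bound I would repeat the factorisation argument used just above for maps into a free $G$-space. Let $i\colon X_0\hookrightarrow X$ and $k\colon X\hookrightarrow C$ be the two inclusions; by the camomile axioms the composite $j=k\circ i\colon X_0\to C$ is an equivariant embedding, so property~1 gives $\ind C\ge\ind X_0=n$. Suppose, for contradiction, that $\ind C=n$. Then $\ind X_0=\ind C=n<\infty$, so property~8 applied to $j$ forces $j^*\colon H^n(C;\mathbb Z)\to H^n(X_0;\mathbb Z)$ to be nonzero. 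But $j^*=i^*\circ k^*$, and $i^*\colon H^n(X;\mathbb Z)\to H^n(X_0;\mathbb Z)$ vanishes because $X_0$ is an $n$-c.t.-subspace of $X$ over $\mathbb Z$; hence $j^*=0$, a contradiction. Therefore $\ind C\ge n+1$. (The same two facts applied to $\id_{X_0}$ also show $H^n(X_0;\mathbb Z)\ne0$, so $X\ne X_0$ and $X\setminus X_0\ne\varnothing$, which is needed below.)

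For the upper bound I would cover $C$ by two open invariant subspaces and invoke subadditivity. Put $V:=C\setminus X_0$. From the definition of a camomile, $V$ is an open invariant subspace of $C$, it is a free $G$-space, and $V=\bigcup_{g\in G}g(X\setminus X_0)$, where the translates are open in $C$ and pairwise disjoint (they meet only along $X_0$); thus $V$ is the disjoint union of $|G|$ clopen ``petals'' that are permuted simply transitively by $G$. Sending each petal $g(X\setminus X_0)$ to $g$ therefore defines a continuous equivariant map $V\to G=J^1(G)$, whence $\ind V\le\ind J^1(G)\le\tind J^1(G)=0$, and $\ind V=0$ since $V\ne\varnothing$. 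On the other hand, since $X_0$ is a closed invariant subspace of $C$, tautness (property~3) supplies an open invariant neighbourhood $U$ of $X_0$ in $C$ with $\ind U=\ind X_0=n$. As $C=U\cup V$, property~2 yields $\ind C\le\ind U+\ind V+1=n+0+1=n+1$. Combining the two bounds gives $\ind C=n+1$.

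The step I expect to require the most care is the equality $\ind(C\setminus X_0)=0$: it hinges on reading off from the definition of a camomile that the translates $g(X\setminus X_0)$ are open in $C$ and are permuted freely, so that the ``which petal'' map to $G$ is well defined and continuous (this also makes $X_0$ closed in $C$, which tautness needs). Everything else — the factorisation through the vanishing $i^*$ in the lower bound, and the open cover $C=U\cup V$ with subadditivity in the upper bound — is a mechanical application of the index axioms, parallel to the $G=\mathbb Z_2$ treatment in~\cite{MusVo}.
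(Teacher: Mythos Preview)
Your argument is correct and follows the same route as the paper: the lower bound via the equivariant inclusion $X_0\hookrightarrow C$ together with properties~1 and~8 (the factorisation $j^*=i^*\circ k^*$ through the vanishing $i^*$), and the upper bound via tautness, the ``petal'' map $C\setminus X_0\to G$, and subadditivity. Your choice of the open invariant cover $C=U\cup(C\setminus X_0)$ is in fact slightly cleaner than the paper's wording, which takes the \emph{closed} complement of the neighbourhood and so does not literally fit the hypothesis of property~2; your version avoids this wrinkle.
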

\begin{proof}
Since the inclusion $X_0\subset C$ is equivariant, we have from property~1 that $\ind C\ge n$ and from property~8 obtain that
$\ind C\ge n+1$. By property~3 there exists an invariant neighborhood of $X_0$ in $C$ of index $n$. A complement of this neighborhood is a $G$-space
that can be mapped equivariantly to $G$, so its index equals zero. Hence from property~2 we obtain that $\ind C\le n+1$.
\end{proof}

Now we show how to construct a camomile in the case when
$X$ is a finite-dimensional compact space and $X_0$ its closed subspace (so $X_0$ is a compactum also).

By Mostow theorem~\cite{Mostow} we can equivariantly embed $X_0$ into finite-dimensional Euclidean $G$-space $V$. By Tietze lemma
we can extend this embedding to the map $\varphi:X\to V$. If $\dim X=k$ then using N\"obeling--Pontryagin theorem (see e.g. \cite{AF}) we can embed
$X$ into the unit sphere $S^{2k+1}\subset \mathbb R^{2k+2}$. Denote
this embedding by $\psi:X\to \mathbb R^{2k+2}$.
Define a real-valued function $h:X\to \mathbb R$ as $h(x)=\rho(x,X_0)$, the distance between a point $x$ and $X_0$. This function takes zero values on $X_0$ and
is positive on $X\setminus X_0$. Define $\eta:X\to \mathbb R^{2k+2}$ as $\eta(x)=h(x)\psi(x)$. Then $\zeta:X\to V\oplus \mathbb R^{2k+2}$,
$\zeta(x)=(\varphi(x),\eta(x))$, is an embedding. We will consider $V\oplus \mathbb R^{2k+2}$ as a Euclidean $G$-space ($G$ acts trivially on $\mathbb R^{2k+2}$).
Then $\zeta$ is the embedding which is equivariant on $X_0$. Finally put $W=V\oplus \mathbb R^{2k+2}\oplus \mathbb R[G]$ where $R[G]$ is the group ring considered as
Euclidean space of dimension $|G|$, the order of $G$. The group acts on $R[G]$ by left multiplication and it is convenient to denote basis vectors as
elements of the group $G$, so $\mathbb R[G]=\oplus_{g\in G}\mathbb R\cdot g$. Now we define an embedding
$\mu:X\to V\oplus \mathbb R^{2k+2}\oplus \mathbb R\cdot e\subset W$ where $e\in G$ is the unit of $G$ by the formula
$(\varphi(x),h(x)\psi(x), h(x)\cdot e)$. Then $\mu:X\to W$ is equivariant on $X_0$ and $C=G\mu(X)$ is a camomile.

In fact the same construction of the camomile is valid for finite-dimensional separable metric space $X$ and closed subspace $X_0$ (with $G$-action).

Camomile is convenient for proving results of Bourgin--Yang type.

\begin{theorem} Assume that $Y$ is a $G$-space, $Y_0$ its invariant closed subspace such that the action on $Y\setminus Y_0$ is free, and $f:X\to Y$ a
continuous map. If

1) $n=\ind X_0> \ind (Y\setminus Y_0)$,

2) $X_0$ is an $n$-c.t.-subspace of $X$ over $\mathbb Z$,

3) $f|_{X_0}:X_0\to Y$ is equivariant,

\noindent
then $\dim f^{-1}(Y_0)\ge n-\ind (Y\setminus Y_0)$.
\end{theorem}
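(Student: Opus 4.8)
The plan is to reduce the statement to a Bourgin--Yang type estimate for the camomile $C$ associated with the embedding $X_0 \subset X$, and then apply the intersection/index-subadditivity machinery already developed above. First I would argue by contradiction, but with a dimension count rather than an emptiness conclusion: suppose $\dim f^{-1}(Y_0) < n - \ind(Y\setminus Y_0)$, i.e.\ the "bad" set $Z = f^{-1}(Y_0)$ is small. The map $f$ restricted to $X \setminus Z$ lands in $Y \setminus Y_0$, on which $G$ acts freely, and by hypothesis 3 this map is equivariant on $X_0$ (note $X_0 \cap Z = \emptyset$ automatically, since $f|_{X_0}$ is equivariant into $Y$ and we may assume $f(X_0) \subset Y\setminus Y_0$; if $f(X_0)\cap Y_0\neq\emptyset$ the conclusion is immediate because then $Z \supset$ an equivariant copy of part of $X_0$, which already has dimension $\geq n - \ind(Y\setminus Y_0)$ by property~6 of the index applied to $X_0$ — I would handle this degenerate case first).

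Next I would pass to the camomile. By the construction preceding the statement, $X$ embeds in a Euclidean $G$-space $W$ with an embedding $\mu$ that is equivariant on $X_0$, and $C = G\mu(X)$ is a camomile. The key structural fact is Theorem~6.3 (the camomile with an $n$-c.t.\ subspace of index $n$ has $\ind C = n+1$), which uses hypothesis 2. I would then combine $f$ with the distance function to $Z$ to build an equivariant map out of $C$: on each translate $g(X\setminus X_0)$ the map $f$ is defined, and using $\rho(\,\cdot\,, Z)$ as a scaling coordinate (exactly as $h(x) = \rho(x,X_0)$ was used to build $C$) one produces an equivariant map
$$
F : C \setminus (\text{small set over } Z) \longrightarrow (Y\setminus Y_0) * \mathbb{R}[G]^{\oplus k}
$$
or, more cleanly, I would delete a $G$-invariant neighborhood $U$ of $Z' := \mu(Z)\cup g\mu(Z)\cup\cdots$ inside $C$ and map $C \setminus U$ equivariantly to $Y\setminus Y_0$; here $U$ can be taken with $\ind U$ controlled by $\dim Z$ via properties~5 and 7 (an open subset of a space whose relevant piece has dimension $< n - \ind(Y\setminus Y_0)$ has index $< n - \ind(Y\setminus Y_0)$). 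Then property~2 (subadditivity) applied to the cover $C = (C\setminus \overline{U'}) \cup U$ gives
$$
n+1 = \ind C \leq \ind(C\setminus\overline{U'}) + \ind U + 1 \leq \ind(Y\setminus Y_0) + \bigl(n - \ind(Y\setminus Y_0) - 1\bigr) + 1 = n,
$$
a contradiction. The inequality $\ind(C\setminus\overline{U'}) \leq \ind(Y\setminus Y_0)$ comes from property~1 and the equivariant map to $Y\setminus Y_0$; the bound on $\ind U$ comes from the tautness/finite-dimensionality properties together with the dimension hypothesis on $f^{-1}(Y_0)$.

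The main obstacle I expect is the careful construction of the equivariant map on $C$ minus a neighborhood of the preimage set, and in particular making sure the neighborhood $U$ can be chosen $G$-invariant and with the index bound $\ind U \leq n - \ind(Y\setminus Y_0) - 1$: one needs that $\dim U$ (or at least the index of the relevant invariant piece) does not exceed $\dim f^{-1}(Y_0)$, which requires a shrinking/tautness argument in the paracompact finite-dimensional setting, possibly first replacing $f^{-1}(Y_0)$ by a slightly larger closed invariant set of the same dimension. The bookkeeping of which pieces of $C$ are free (the complement of $X_0$) versus where the action could fail is the delicate point, but it is exactly parallel to the camomile construction given in the text, so I would lean on that construction verbatim and only track the distance-to-$Z$ coordinate through it.
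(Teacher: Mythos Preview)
Your overall strategy is correct and matches the paper's: pass to the camomile $C$ with $\ind C = n+1$ and then run a subadditivity argument. But you are working much harder than necessary, and the reason is that you miss the key structural observation: $f$ extends \emph{trivially} to an equivariant map $h:C\to Y$ on the \emph{whole} camomile. Since $C = G\cdot X$ with the petals glued precisely along $X_0$, and $f|_{X_0}$ is already equivariant, the formula $h(g\cdot x) = g\cdot f(x)$ is well-defined and equivariant everywhere on $C$. There is no need to excise a neighbourhood first, no need for a distance-to-$Z$ coordinate, and no issue about whether $f(X_0)$ meets $Y_0$.

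Once you have $h$, the paper simply applies the standard Bourgin--Yang inequality (which is exactly your subadditivity/tautness computation, but applied to $h^{-1}(Y_0)$ rather than to a neighbourhood you had to build by hand):
\[
\ind h^{-1}(Y_0) \;\ge\; \ind C - \ind(Y\setminus Y_0) - 1 \;=\; n - \ind(Y\setminus Y_0),
\]
whence $\dim h^{-1}(Y_0) \ge n - \ind(Y\setminus Y_0)$. Finally, and this is the step that replaces all of your neighbourhood bookkeeping, one observes from the camomile structure that $h^{-1}(Y_0) = \bigcup_{g\in G} g\cdot f^{-1}(Y_0)$, a finite union of homeomorphic copies of $f^{-1}(Y_0)$, so $\dim f^{-1}(Y_0) = \dim h^{-1}(Y_0)$.

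In short: your contradiction argument with the cover $C = (C\setminus\overline{U'})\cup U$ is a correct unwinding of the Bourgin--Yang inequality, but the paper avoids the delicate shrinking/tautness step by first extending $f$ globally and only then decomposing. Your ``main obstacle'' paragraph is solving a problem that disappears once you notice the global extension $h$ exists.
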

\begin{proof} We have $\ind C=n+1$, where $C$ is the camomile. Denote by $h:C\to Y$ the equivariant extension of $f$. Then $\ind h^{-1}Y_0\ge \ind C -\ind (Y\setminus P)-1=n-\ind (Y\setminus Y_0)$, hence
$\dim h^{-1}Y_0\ge n-\ind (Y\setminus Y_0)$. Since
$h^{-1}Y_0=\bigcup_{g\in G} g\cdot f^{-1}(Y_0)$ and $\dim g\cdot f^{-1}(Y_0)=\dim f^{-1}(Y_0)$ for any $g\in G$, we are done.
\end{proof}

Since a free $G$-space is a free space with respect to any subgroup we have
analogs of the above results in which $\ind (\,\cdot\,)$ is replaced by $\indp (\,\cdot\,)$ where $p=|H|$ is a prime and $H$ is some subgroup of $G$. For example we have the following result:
\begin{theorem}\label{ZYPp} Let $Y$ be a $G$-space, $Y_0$ its invariant closed subspace such that the action on $Y\setminus Y_0$ is free, and $f:X\to Y$ a continuous map. Let $H=\mathbb Z_p$, $p$ is a prime, be a subgroup of $G$.
Assume that

1) $n=\indp X_0\ge \indp (Y\setminus Y_0)$,

2) $X_0$ is an $n$-c.t.-subspace of $X$ over $\mathbb Z_p$,

3) $f|_{X_0}:X\to Y$ is equivariant.

Then $f^{-1}(Y_0)\not=\emptyset$.

If $n=\indp X> \indp (Y\setminus Y_0)$ then $\dim f^{-1}(Y_0)\ge n-\indp (Y\setminus Y_0)$.
\end{theorem}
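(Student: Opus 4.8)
The plan is to imitate the proofs of Theorem~\ref{ZYP} and of the Bourgin--Yang type theorem given above, replacing the integral index $\ind(\,\cdot\,)$ by $\indp(\,\cdot\,)$ throughout. The one structural remark that makes this legitimate is that a free $G$-space is automatically a free $H$-space for the subgroup $H=\mathbb{Z}_p$, so $X_0$, $Y\setminus Y_0$, and (for the second assertion) the camomile $C$ are all free $\mathbb{Z}_p$-spaces, and properties~1--8 of $\indp(\,\cdot\,)$ recorded in Section~4 are available for them with $\mathbb{Z}_p$-coefficients. Likewise the hypothesis that $X_0$ is an $n$-c.t.-subspace of $X$ over $\mathbb{Z}_p$ is precisely what is needed to feed property~8.

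For the first assertion I would argue by contradiction, assuming $f^{-1}(Y_0)=\emptyset$, and split into two cases according to the hypothesis $n\ge\indp(Y\setminus Y_0)$. If $n>\indp(Y\setminus Y_0)$, then $f$ maps $X$ into $Y\setminus Y_0$ and $f|_{X_0}:X_0\to Y\setminus Y_0$ is an equivariant map of free $\mathbb{Z}_p$-spaces, so property~1 gives $n=\indp X_0\le\indp(Y\setminus Y_0)$, a contradiction; note hypothesis~2 is not used in this case. If $n=\indp(Y\setminus Y_0)$, write $i:X_0\hookrightarrow X$ for the inclusion; since $f(X)\subset Y\setminus Y_0$ we have $f|_{X_0}=f\circ i$ as maps into $Y\setminus Y_0$, hence $(f|_{X_0})^*=i^*\circ f^*$ on $H^n(\,\cdot\,;\mathbb{Z}_p)$, and hypothesis~2 forces $(f|_{X_0})^*=0:H^n(Y\setminus Y_0;\mathbb{Z}_p)\to H^n(X_0;\mathbb{Z}_p)$. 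This contradicts property~8, which applies because $\indp X_0=\indp(Y\setminus Y_0)=n<\infty$.

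For the dimension estimate (the case $n>\indp(Y\setminus Y_0)$) I would build the camomile $C$ attached to $X_0\subset X$ exactly as in Section~6 (this is where one uses that $X$ is finite-dimensional and $X_0$ closed), and extend $f$ to an equivariant map $h:C\to Y$ by $h(gx)=g\,f(x)$, which is well defined since the translates $gX$ overlap only inside $X_0$ and $f|_{X_0}$ is equivariant. The action on $C$ is free (it is free on both $X_0$ and $C\setminus X_0$), so $\indp$ makes sense on $C$ and on its closed invariant subset $h^{-1}(Y_0)$. The $\mathbb{Z}_p$-analog of the camomile theorem gives $\indp C=n+1$. Tautness (property~3) yields an open invariant neighbourhood $U\supseteq h^{-1}(Y_0)$ with $\indp U=\indp h^{-1}(Y_0)$; since then $C=U\cup h^{-1}(Y\setminus Y_0)$, property~2 combined with $\indp h^{-1}(Y\setminus Y_0)\le\indp(Y\setminus Y_0)$ (property~1, via the restriction of $h$) gives $\indp h^{-1}(Y_0)\ge\indp C-\indp(Y\setminus Y_0)-1=n-\indp(Y\setminus Y_0)$, whence $\dim h^{-1}(Y_0)\ge n-\indp(Y\setminus Y_0)$. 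Finally $h^{-1}(Y_0)=\bigcup_{g\in G}g\cdot f^{-1}(Y_0)$ is a finite union of homeomorphic copies of $f^{-1}(Y_0)$, so by the finite sum theorem for covering dimension $\dim f^{-1}(Y_0)=\dim h^{-1}(Y_0)$, which closes the argument.

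The case analysis is routine; the \emph{main obstacle} is to check that the camomile--index identity $\indp C=n+1$ transfers verbatim from its integral counterpart, i.e.\ that the proof of the earlier theorem ``$\ind C=n+1$'' really used only properties~1,~2,~3 and~8, all of which hold for $\indp(\,\cdot\,)$. On the point-set side one must confirm that tautness legitimately permits replacing the closed set $h^{-1}(Y_0)$ by an open invariant neighbourhood before invoking property~2, and that the finite sum theorem applies in the ambient category (finite-dimensional separable metric spaces), so that passing from $h^{-1}(Y_0)$ back to $f^{-1}(Y_0)$ does not change the covering dimension.
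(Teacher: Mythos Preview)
Your proposal is correct and coincides with the paper's approach. The paper gives no separate proof of this theorem: it merely remarks that, since a free $G$-space is also free for any subgroup $H$, all preceding results (Theorem~\ref{ZYP} and the Bourgin--Yang theorem with camomile) carry over verbatim with $\ind(\,\cdot\,)$ replaced by $\indp(\,\cdot\,)$; you have carried out exactly this substitution, and your extra care in checking that the proof of $\ind C=n+1$ uses only properties~1,~2,~3,~8 and in justifying the tautness/subadditivity step is appropriate and correct.
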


\section{Tucker type lemmas}

\subsection{Tucker type lemmas for $G$-spaces}

Let $X$ be a simplicial complex and $C$ be a finite set.
Recall that a $C$-labeling (coloring) of $X$ is a map $V(X)\to C$ of the vertex set $V(X)$ to $C$. For $C=G\times\{1,\dots,n\}$ we say that
we have a $(G,n)$-labeling.
Thus a $(G,n)$-labeling prescribes to each vertex some pair $(g,k)$ where $g\in G$ and $k\in \{1,\dots,n\}$.

Now we define equivariant labelings.

\begin{df} Let $X$ be a simplicial complex with a simplicial $G$-action, where $G$ is a finite group, and $C$ is a finite $G$-set.
An equivariant $C$-labeling (coloring) of $X$ is an equivariant map $V(X)\to C$ of the vertex set $V(X)$ to $C$. For $C=G\times\{1,\dots,n\}$, where
$G$ acts on the first factor by left multiplication and on the second factor the action is trivial, we call $C$-labeling as equivariant $(G,n)$-labeling.
\end{df}

\begin{df} An edge in $X$ is called complementary if labels
of its vertices belong to the same orbit in $C$. For $(G,n)$-labeling it means that vertices of a complementary edge have the form $(g_1,k)$ and $(g_2,k)$,
$g_1\not=g_2$, for some $k\in \{1,\dots,n\}$.
\end{df}

If $G=\mathbb Z_2\cong C_2=\{1,-1\}$ is the cyclic group of order $2$ then a $(G,n)$-labeling is just a Tucker labeling since there is an obvious bijection $(\pm 1,k)\leftrightarrow \pm k$ between sets $\{1,-1\}\times \{1,\ldots,n\}$ and $\{+1,-1,+2,-2,\ldots, +n,-n\}$. Under this identification an equivariant $(\mathbb Z_2,n)$-labeling becomes an equivariant $\{\pm 1,\dots,\pm n\}$-labeling and
a complementary edge (for $(\mathbb Z_2,n)$-labeled complex $X$) is just a complementary edge in Tucker's sense (the sum of its labels equals zero).




\begin{theorem}\label{G-Takker}
$\tind X\geq d$\, if and only if for any equivariant $(G,d)$-labeling of the vertex set of
an arbitrary equivariant triangulation of $X$ there exists a complementary edge.
\end{theorem}

\begin{proof}  Assume there is a $(G,d)$-labeling of the vertex set of
an equivariant triangulation of $X$ without complementary edges. Then such a labeling
provides an equivariant map $X\to J^d(G)$. This contradicts with the assumption that $\tind X\geq d$.

Now assume that for any equivariant $(G,d)$-labeling of the vertex set of
an arbitrary equivariant triangulation of $X$ there exists a complementary edge.
Assume that $\tind X< d$. Then there exists an equivariant continuous map $X\to J^d(G)$ and
an equivariant simplicial approximation of this map which is a simplicial map of some triangulation of $X$.
So there exists a $(G,d)$-labeling of $J^d(G)$ without complementary edges. Thus, the inverse image of this labeling
is a $(G,d)$-labeling of $X$ without complementary edges, a contradiction.
\end{proof}
\begin{remark} {\rm
1)\, From equivariance of labeling it follows that if there exists a complementary edge, then there is a whole orbit of complementary edges, i.e.
there exist at least $|G|$ different complementary edges.

2)\, The first statement of theorem~\ref{G-Takker} holds for any numerical index $\Ind(\,\cdot\,)$ which possesses the main property of index and dimension property, i.e.:

A) If there exists an equivariant map $X\to Y$ of $G$-spaces then $\Ind(X)\le \Ind(Y)$.

B) $\Ind J^{d+1}(G)=d$.

All indexes considered above satisfy these properties.

Thus the following assertion holds:
}

{\it Let $X$ be a simplicial complex with a free simplicial $G$-action such that\, $\Ind(X)\le d$. Then for any equivariant $(G,d)$-labeling of the vertex set of
$X$ there exists a complementary edge (actually there exist at least $|G|$ complementary edges).}

\end{remark}

\subsection{Tucker type lemmas for bounded spaces}

Consider the case of simplicial complex $X$ and its subcomplex $X_0$. We assume that $G$ acts freely and simplicially on $X_0$.

\begin{theorem} Assume that $\ind X=n-1$ and that $X_0$ is an $(n-1)$-c.t.-subspace of $X$ over $\mathbb Z$.
Then for any $(G,n)$-labeling of the vertex set of an arbitrary triangulation of $X$
which is equivariant on $X_0$ there exists a complementary edge.
\end{theorem}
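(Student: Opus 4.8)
The plan is to reduce this combinatorial statement to the Bourgin--Yang/Borsuk--Ulam type Theorem~6.5 (the camomile version of Theorem~\ref{ZYP}) via the standard ``labeling $\Leftrightarrow$ equivariant map to a join'' dictionary already used in the proof of Theorem~3.1. First I would argue by contradiction: suppose some triangulation $\mathcal T$ of $X$ carries a $(G,n)$-labeling $L:V(\mathcal T)\to G\times\{1,\dots,n\}$, equivariant on $X_0$, with no complementary edge. The absence of complementary edges means that on every simplex $\sigma$ of $\mathcal T$ all labels $(g,k)$ with the same second coordinate $k$ actually share the same first coordinate $g$; hence $L$ determines a simplicial map $L_*:\mathcal T\to G*\cdots*G = J^n(G)$ (the join of $n$ copies of $G$), sending a vertex labeled $(g,k)$ to the copy of $g$ in the $k$-th factor. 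On the subcomplex $X_0$ the labeling is equivariant, so $L_*|_{X_0}:X_0\to J^n(G)$ is an equivariant map into the free $G$-space $J^n(G)$.

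Next I would set up the hypotheses of Theorem~6.5 (or directly Theorem~\ref{ZYP}) with $Y=J^n(G)$ and $Y_0=\emptyset$. Here $J^n(G)$ is a free $G$-space with $\ind J^n(G)=\tind J^n(G)=n-1$ (this equality is recorded in the excerpt, e.g. in the proof of Proposition~\ref{acyclic} and Remark~4.3(4)). We are given $\ind X_0 = n-1$ — wait, the statement says $\ind X = n-1$; I must be careful. Reading again: the hypothesis is $\ind X = n-1$ and $X_0$ is an $(n-1)$-c.t.-subspace of $X$. Since $\ind$ is monotone under inclusion, $\ind X_0 \le \ind X = n-1$; combined with the c.t.\ condition this is exactly the configuration of Theorem~\ref{ZYP}/Theorem~6.5 with the target having index $n-1$. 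So I apply the first statement of Theorem~6.5: the inclusion $i:X_0\hookrightarrow X$ together with the equivariant map $L_*|_{X_0}:X_0\to J^n(G)$ would, if $L_*:X\to J^n(G)$ existed as we have constructed, force $(L_*|_{X_0})^* = i^*\circ L_*^*$ to be trivial in dimension $n-1$ on $H^{n-1}(J^n(G);\mathbb Z)$, since $i^*$ is trivial there. But by property~8 of the cohomological index, an equivariant map between free $G$-spaces of equal finite index $n-1$ induces a nontrivial homomorphism on $H^{n-1}(\,\cdot\,;\mathbb Z)$ — contradiction. This yields the complementary edge.

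The one genuine subtlety — and the step I expect to be the main obstacle — is the passage from the hypothesis $\ind X = n-1$ to a statement about the equivariant map $X_0\to J^n(G)$: namely, I need $\ind X_0 = n-1$ (not merely $\le n-1$) for property~8 to bite. This is where the $(n-1)$-c.t.\ hypothesis must be leveraged more carefully: if $\ind X_0 < n-1$ then already by property~1 there is no equivariant map $X_0\to J^n(G)$ whose composition with anything behaves well, but we must instead run the argument as in Theorem~7.1 (Section~7), factoring through the inclusion into $X$ and using that $\ind X = n-1$ forces $\ind X_0 = n-1$ precisely because of the c.t.\ condition together with property~8 applied to $i$ itself. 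Concretely: the composite $X_0\xrightarrow{i} X\xrightarrow{L_*} J^n(G)$ gives $\ind X_0 \le \ind X = n-1$; if it were $<n-1$ we would contradict the Borsuk--Ulam/camomile bound of Theorem~6.2--6.5 (which forces the camomile index to be $\ge n$ when $X_0$ is $(n-1)$-c.t.\ in $X$ with $\ind X_0 = n-1$), so the c.t.\ hypothesis is exactly what makes the indices match. Once the index bookkeeping is pinned down, the rest is the routine simplicial-approximation/join dictionary and a direct citation of property~8.
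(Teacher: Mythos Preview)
Your core argument is exactly the paper's proof: a labeling without complementary edges yields a simplicial map $\psi:X\to J^n(G)$, equivariant on $X_0$; since $\psi|_{X_0}=\psi\circ i$ and $i^*$ vanishes on $H^{n-1}(\,\cdot\,;\mathbb Z)$ by the c.t.\ hypothesis, $(\psi|_{X_0})^*$ is trivial in degree $n-1$; property~8 then gives a contradiction because $\ind X_0=\ind J^n(G)=n-1$.

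The ``subtlety'' you wrestle with is textual, not mathematical. In the standing setup of Sections~5 and~7, $X$ carries no $G$-action (only $X_0$ does), so the symbol ``$\ind X$'' is not even defined. Both the hypothesis of the theorem and the last line of the paper's own proof write ``$X$'' where ``$X_0$'' is intended; the correct hypothesis is $\ind X_0=n-1$, and with that reading property~8 applies directly to the equivariant map $\psi|_{X_0}:X_0\to J^n(G)$ with no further work.

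Accordingly, your attempt to \emph{derive} $\ind X_0=n-1$ from ``$\ind X=n-1$'' should be discarded. The monotonicity step ``$\ind X_0\le\ind X$'' is illegitimate since $i:X_0\hookrightarrow X$ is not an equivariant map (there is no action on $X$), and the appeal to Theorems~6.2--6.5 is circular: those results \emph{assume} $\ind X_0$ equals the relevant value, they do not produce it. Drop the camomile detour; once the typo is corrected, the proof is the two-line argument in your first paragraph.
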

\begin{proof} We argue by contradiction. A $(G,n)$-labeling of the vertex set of a triangulation of $X$ without complementary edges provides
a map $\psi:X\to J^n(G)$, and this map is equivariant on $X$ since our $(G,n)$-labeling is equivariant on $X$. Since $i^*$ is trivial in dimension $n-1$, where $i:X_0\subset X$ is the inclusion, we see that
$(\psi|_X)^*:H^{n-1}(J^n(G);\mathbb Z)\to H^{n-1}(X;\mathbb Z)$ is trivial, and we obtain a contradiction with property~8 of cohomological index,
because $\ind X=n-1=\ind J^n(G)$.
\end{proof}

As a partial case we obtain:
\begin{theorem} Let $M^n$ be a connected compact orientable $PL$--manifold such that its boundary $\partial M$ is homeomorphic to the sphere ${\Bbb S}^{n-1}$. Let $T$ be a triangulation of $M$. Suppose that  there exists a free simplicial action of a finite group $G$ on $\partial T$. Then for any $(G,n)$--labeling of $V(T)$ that is an equivariant on  $\partial T$ there exists a complementary edge.
\end{theorem}

\subsection{Bourgin--Yang type results and Tucker type lemmas}

Consider first the case $G=\mathbb Z_2$.
By $\inn (\,\cdot\,)$ and $\tind (\,\cdot\,)$, $G=\mathbb Z_2$, we denote Yang's homological and topological indexes respectively, so
$\inn ({\Bbb S}^n)=\tind ({\Bbb S}^n)=n$.

\begin{prop} Let $f: {\Bbb S}^n\to M$ be a continuous map to $m$-dimensional manifold $M$ where $d:=n-m>0$. Assume that $K:=\{x\in {\Bbb S}^n\,\,|\,\, f(x)=f(-x)\}$ is a triangulable space. Then for any equivariant triangulation of $K$ and an equivariant labeling of its vertices by $\{+1,-1,+2,-2,\ldots, +d,-d\}$ there exists a complementary edge.
\end{prop}
\begin{proof}
It follows from \cite{Nakaoka} (see also \cite{Vo79}) that $\inn K\ge d$, and by Theorem~\ref{G-Takker} we are done.
\end{proof}
Note that if $M$ is a $PL$-manifold and $f$ is simplicial with respect to some equivariant triangulation of ${\Bbb S}^n$, then $K$ is a triangulable space (but not necessarily a simplicial subspace).

\medskip

Similar result holds for $G=\mathbb Z_p\oplus\dots\oplus\mathbb Z_p$, elementary Abelian group, $p$ is a prime.
\begin{prop} Let $f: J^{n+1}(G)\to M$ be a continuous map to $m$-dimensional manifold $M$ where $d:=n-m(|G|-1)>0$ and $G=\mathbb Z_p\oplus\dots\oplus\mathbb Z_p$ is an elementary Abelian group, $p$ is a prime. Assume that  $K:=\{x\in J^{n+1}(G)\,\,|\,\, f(x)=f(gx)\,\,\forall\,\,g\in G\}$ is a triangulable space.
Then for any equivariant triangulation of $K$ and an equivariant $(G,d)$-labeling of its vertices there exists a complementary edge.
\end{prop}
\begin{proof}
It follows from~\cite{Vo05} that $\tind K\ge n-m(|G|-1)$. Hence the result follows from Theorem~\ref{G-Takker}.
\end{proof}
\medskip

 \medskip

O.\,R. Musin,  University of Texas Rio Grande Valley,  Brownsville, TX, 78520\\
{\it E-mail address:} oleg.musin@utrgv.edu

 \medskip

A.\,Yu. Volovikov,  Department of Higher Mathematics, MTU MIREA, Vernadskogo av., 78, Moscow, 119454, Russia\\
{\it E-mail address:} $\mbox{a\_volov@list.ru}$

\end{document}